\def\?[#1]{\textbf{[#1]}\marginpar{\Large{\textbf{??}}}}
\numberwithin{equation}{section}
\newtheorem{theorem}{Theorem}[section]
\newtheorem{lemma}[theorem]{Lemma}
\newtheorem{proposition}[theorem]{Proposition}
\newtheorem{corollary}[theorem]{Corollary}
\newtheorem{remark}[theorem]{Remark}
\newcommand{\mc}{\mathcal}
\newcommand{\norm}[1]{\left\Vert#1\right\Vert}
\newcommand{\bbar}{\overline}
\renewcommand{\det}{\text{det}'}
\let\Im=\Imag
\let\Re=\Real
\DeclareMathOperator{\Vol}{Vol}
\renewcommand{\tilde}{\widetilde}          
\DeclareMathSymbol{\leqslant}{\mathalpha}{AMSa}{"36} 
\DeclareMathSymbol{\geqslant}{\mathalpha}{AMSa}{"3E} 
\DeclareMathSymbol{\eset}{\mathalpha}{AMSb}{"3F}     
\renewcommand{\leq}{\;\leqslant\;}                   
\renewcommand{\geq}{\;\geqslant\;}                   
\newcommand{\C}{\mathbb{C}}
\newcommand{\R}{\mathbb{R}}
\newcommand{\Z}{\mathbb{Z}}
\renewcommand{\H}{\mathbb{H}}
\newcommand{\Q}{\mathbb{Q}}
\newcommand{\U}{\mathbb{U}}
\newcommand{\E}{\mathbb{E}}
\renewcommand{\P}{\mathbb{P}}
\newcommand{\ind}{\mathds{1}}
\def\S{\mathbb{S}}
\def\T{\mathbb{T}}
\def\bi{\begin{itemize}}
\def\ei{\end{itemize}}
\def\bnum{\begin{enumerate}}
\def\enum{\end{enumerate}}
\def\<#1{\langle #1 \rangle}
\def\g{\mathbf{g}}
\def\U{\Upsilon_\frac{\gamma}{2}}
\title[Modular bootstrap from the path integral]{Modular bootstrap agrees with path integral in the large moduli limit}
\author[Guillaume Baverez]{Guillaume Baverez}
\thanks{Supported by the EPSRC grant EP/L016516/1 for the University of Cambridge CDT (CCA)}
 \address{University of Cambridge, Centre for Mathematical Sciences, Cambridge CB3 0WA, UK}
 \email{gb539@cam.ac.uk}
\begin{document}

\begin{abstract}
Based on the rigorous path integral formulation of Liouville Conformal Field Theory initiated by David-Kupiainen-Rhodes-Vargas \cite{DKRV1} on the Riemann sphere and David-Rhodes-Vargas \cite{DRV} on the torus of modulus $\tau$, we give the exact asymptotic behaviour of the 1-point toric correlation function as $\Im\tau\to\infty$.

In agreement with formulae predicted within the bootstrap formalism of theoretical physics, our results feature an $(\Im\tau)^{-3/2}$ decay rate and we identify the derivative of DOZZ formula in the limit.
\end{abstract}
\maketitle




  



 
\section{Introduction}
\label{sec:intro}
In theoretical physics, there are two approaches to Conformal Field Theories (CFTs). The first is the Hamiltonian approach: it consists in quantising an action functional and is usually treated with Feynman path integrals. The second is the conformal bootstrap: an abstract machinery used to classify CFTs from the algebraic information encoded by conformal invariance. Liouville CFT arises in the Hamiltonian approach in many fields of theoretical physics, notably in string theory \cite{Po,Da,DhPh}. In the conformal bootstrap, it is the first CFT with continuous spectrum that physicists were able to ``solve" \cite{Ri}.

From a mathematical point of view, path integrals are not rigorous, but recently, a rigorous probabilistic framework based on the Gaussian Free Field (GFF) and Gaussian Multiplicative Chaos (GMC) was introduced in order to make sense of the path integral approach to LCFT on any compact Riemann surface \cite{DKRV1,DRV,GRV}. The remaining challenge for probabilists is to show that the path integral carries all the representation theoretic aspects predicted by the conformal bootstrap.

A first step was made in this direction when \cite{KRV2} showed that the structure constants of LCFT (see Section \ref{subsec:bootstrap}) satisfy the so-called DOZZ formula. The term ``bootstrapping" refers to the recursive computation of correlation functions from the structure constants, and this paper checks the validity of this recursion in a weakly interacting regime. From a probabilistic point of view, the DOZZ formula is a highly non-trivial integrability result on GMC, and it was soon followed by the results of \cite{Remy,remy_zhu} where similar methods were implemented in order to compute the law of GMC on the unit circle and interval.

%

 	\subsection{Path integral}
 	\label{subsec:path_integral}
 Let $M$ be either the Riemann sphere $\S^2\simeq\hat{\C}=\C\cup\{\infty\}$ or the torus $\T_\tau\simeq\C/(\Z+\tau\Z)$ for some $\tau\in\H:=\{\Im\tau>0\}$. The \emph{Liouville action} with background metric $g$ on $M$ is the map $S_\mathrm{L}:\Sigma\to\R$ (where $\Sigma$ is some function space to be determined) defined by\footnote{Usually the Liouville action features an additional curvature term. We omitted it since we will work with a background metric which is flat everywhere except on the unit circle.}
 \begin{equation}
 \label{eq:def_liouville}
 S_{\mathrm{L}}(X;g)=\frac{1}{4\pi}\int_M\left(|\nabla X|^2+4\pi\mu e^{\gamma X}g(z)\right)dz,
\end{equation} 	
where $\mu>0$ is the cosmological constant (whose value is unimportant for this paper) and $\gamma\in(0,2)$ is the parameter of the theory. Liouville quantum field theory is the measure formally defined by
\begin{equation}
\label{eq:def_liouville_field}
\langle F\rangle:=\int F(X)e^{-S_{\mathrm{L}}(X;g)}DX
\end{equation}
for all continuous functional $F$. Here, $DX$ should stand for ``Lebesgue" measure on $\Sigma$. Of course, this does not make sense mathematically but it is possible to interpret the formal measure 
\begin{equation}
\label{eq:gff_measure}
\frac{1}{Z_\mathrm{GFF}}e^{-\frac{1}{4\pi}\int_M|\nabla X|^2dz}DX
\end{equation} 
as a Gaussian probability measure on some Hilbert space (to be determined). The resulting field is called the Gaussian Free Field and the quantity $Z_\mathrm{GFF}$ is a ``normalising constant" turning the measure \eqref{eq:gff_measure} into a probability measure. We will refer to it as the partition function of the GFF (see Section \ref{subsubsec:gff}).

As it turns out, the GFF does not live in the space of continuous functions (not even in $L^2$) but is rather a distribution in the sense of Schwartz. It can be shown that the GFF almost surely lives in the topological dual of the Sobolev space $H^1$ with respect to the $L^2$ product. Hence the exponential term $e^{\gamma X}dz$ appearing in the action is not \textit{a priori} well defined, but it can be made sense of after a regularising procedure based on Kahane's theory of Gaussian Mutiplicative Chaos (GMC) (see Section \ref{subsubsec:gmc}).

The main observables of the theory are the \emph{vertex operators} $V_\alpha(z_0)=e^{\alpha X(z_0)}$ for any $z_0\in M$ and $\alpha<Q:=\frac{2}{\gamma}+\frac{\gamma}{2}$. The point $z_0$ is called an \emph{insertion} as it has the interpretation of puncturing $M$ with a conical singularity of order $\alpha/Q$ (see \cite{HMW} and Appendix \ref{app:conical}). The coefficient $\alpha$ is called the \emph{Liouville momentum} and $\Delta_\alpha:=\frac{\alpha}{2}(Q-\frac{\alpha}{2})$ is called the \emph{conformal dimension}. The vertex operators give rise to the correlation functions $\langle\prod_{n=1}^NV_{\alpha_n}(z_n)\rangle$ which are defined for any pairwise disjoint $z_1,...,z_N\in M$ and $\alpha_1,...,\alpha_N\in\R$ satisfying the so-called \emph{Seiberg bounds}
\begin{equation}
\label{eq:seiberg_bounds}
\sum_{n=1}^N\frac{\alpha_n}{Q}-\chi(M)>0\qquad\qquad\forall n,\alpha_n<Q,
\end{equation}
where $\chi(M)$ is the Euler characteristic. The Seiberg bounds have a geometric nature: the $\alpha_n/Q$ singularity introduced by $V_{\alpha_n}(z_n)$ is integrable only if $\alpha_n<Q$, hence the second bound in \eqref{eq:seiberg_bounds}. On the other hand, Gauss-Bonnet theorem shows that the first bound is equivalent to asking for the total curvature on the surface $M\setminus\{z_1,...,z_N\}$ with prescribed conical singularities $\alpha_n/Q$ at $z_n$ to be negative. In particular, the correlation function exists only if $N\geq3$ for the sphere and $N\geq1$ for the torus.

We now briefly recall the results that will be needed in order to state the main result. Consider the Riemann sphere $\S^2\simeq\hat{\C}$ equipped with the metric $g(z)=|z|_+^{-4}$ (with the notation $|z|_+=\max(1,|z|)$). We will refer to this metric as the \emph{cr\^epe metric} as it consists in two flat disks glued together (as can be seen from the change of variable $z\mapsto1/z$). The 3-point function enjoys some conformal covariance under M\"obius transformations \cite{DKRV1}, implying that we can choose to put the insertions at $0,1,\infty$. It was shown in \cite{KRV2} that for all $\alpha_1,\alpha_2,\alpha_3$ satisfying the Seiberg bounds, $\langle V_{\alpha_1}(0)V_{\alpha_2}(1)V_{\alpha_3}(\infty)\rangle_{\S^2}=C_\gamma(\alpha_1,\alpha_2,\alpha_3)$ where $C_\gamma(\alpha_1,\alpha_2,\alpha_3)$ is the celebrated DOZZ formula (see Appendix \ref{app:dozz}).

Recall that a torus is a curve $\C/(\Z+\tau\Z)$ with $\tau\in\H$. The \emph{moduli group} $\Gamma=\mathrm{PSL}(2,\Z)$ acts on $\H$ via linear fractional transformation
\[\psi.\tau=\frac{a\tau+b}{c\tau+d}\]
for all $\psi=\begin{pmatrix}
a & b \\ 
c & d
\end{pmatrix} \in\Gamma$. The \emph{moduli space} is the quotient $\mc{M}:=\Gamma\setminus\H$. Two tori with moduli $\tau,\tau'$ respectively are conformally equivalent if and only if there exists $\psi\in\Gamma$ such that $\tau'=\psi.\tau$. The fundamental domain of $\mc{M}$ is the set $\{z\in\H,\;\Re(z)\in(-1/2,1/2]\text{ and }|z|>1\}\cup\{e^{i\theta},\theta\in[\frac{\pi}{3},\frac{\pi}{2}]\}$ (see Figure \ref{fig:modular_curve}), so that the boundary of the moduli space can be approached by moduli $\tau=\frac{it}{\pi}$ for large $t$. These correspond to ``skinny" tori. From \cite{DRV} it is possible to define the 1-point correlation function $\langle V_\alpha(0)\rangle_\tau$ with flat backgroud metric for each modulus $\tau\in\mc{M}$ and $\alpha\in(0,Q)$,

\begin{figure}[h!] 
\centering
\includegraphics[scale=0.8]{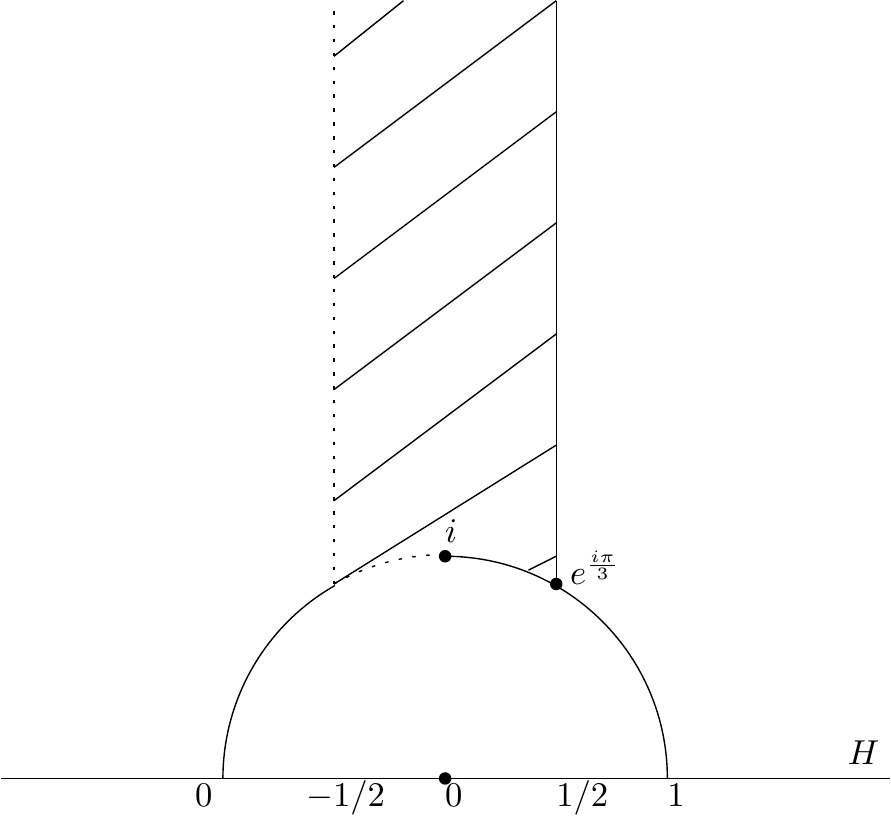}
\caption{\label{fig:modular_curve}The moduli space $\mc{M}=\Gamma\setminus\H$ (hashed). The vertical lines are identified, so that it is topologically a sphere with three marked points at $e^{i\pi/3},i$ and $\infty$. The interesting boundary point is $\infty$, and we will approach it using moduli $\tau=\frac{it}{\pi}$ for large $t$. These correspond to ``skinny" tori.}
\end{figure}

Using the framework of CFT known as the conformal bootstrap, phycisists have argued that all correlation functions on any surface can be derived from the three-point function on the sphere by some topological recursion (see Section \ref{subsec:bootstrap}). In the case of the one-point function on the torus, the formula involves an integral over some algebraically defined objects that do not yet have a probabilistic representation (see Equation \eqref{eq:torus_bootstrap}). However, these objects have nice asymptotic behaviours as $\Im\tau\to\infty$, explaining why we were able to compute the asymptotic behaviour of the one-point toric function and match it with the bootstrap prediction in this limit.

 	
 	\subsection{Conformal bootstrap}
 	\label{subsec:bootstrap}
  From the operator theoretic perspective, a quantum field theory is the data of a self-adjoint non-negative Hamiltonian acting on some Hilbert space. In their founding paper, \cite{BPZ} argued that the Hilbert space of a $2d$ conformal field theory must carry a representation of the Virasoro algebra. This strong constraint on the structure of the Hilbert space led to spectacular integrability results, among which the DOZZ formula from Liouville theory. Although the representation theory of the Virasoro algebra is well-understood mathematically, it is only a conjecture that the path integral of the quantised Liouville action carries the expected algebraic structure. Thus, except for the results of \cite{KRV1,KRV2}, all the formulae from the conformal bootstrap are to be considered as predictions and not facts. 
 
 In the conformal bootstrap framework, any CFT should be characterised by
 \begin{enumerate}
 \item The \emph{spectrum} of the Hamiltonian $S\subset\R_+$. For each $\alpha\in\C$ such that $\Delta_\alpha\in S$, the field $V_\alpha(\cdot)$ is called a \emph{primary field}. It is important to note that the conformal bootstrap assumes that vertex operators are defined for all $\alpha\in\C$ and not necessarily for $\alpha$ in the ``physical region" defined by the Seiberg bounds. The spectrum of Liouville theory is conjectured to be $[\frac{Q^2}{4},\infty)$, corresponding to momenta $\alpha\in Q+i\R$.
 \item The \emph{structure constants}, i.e. the three-point functions on the sphere $\langle V_{\alpha_1}(0)V_{\alpha_2}(1)V_{\alpha_3}(\infty)\rangle_{\S^2}$. In Liouville CFT, the structure constants are given by the DOZZ formula $C_\gamma(\alpha_1,\alpha_2,\alpha_3)$ \cite{DO,ZZ}. Correlation functions are meromorphic functions of each $\alpha\in\C$.
 \end{enumerate}
 From the data of the spectrum and the structure constants, the bootstrap machinery gives a way to compute recursively all correlation functions on any Riemann surface of any genus. Thus, ``solving" a theory means finding both the spectrum and the structure constants.
 
 The two most simple examples are the 4-point spherical and the 1-point toric correlation function. Given two copies $M_1,M_2$ of the thrice punctured sphere $\S^2\setminus\{0,1,\infty\}$, one can glue together annuli neighbourhoods of punctures in $M_1$ and $M_2$ to produce a 4-punctured sphere (see Figure \ref{fig:glue}). Similarly, given one instance of the thrice-punctured sphere, one can glue together annuli neighbourhoods of $0$ and $\infty$ to produce the once-punctured torus.
 
 \begin{figure}[h!]
 \centering
 \includegraphics[scale=0.6]{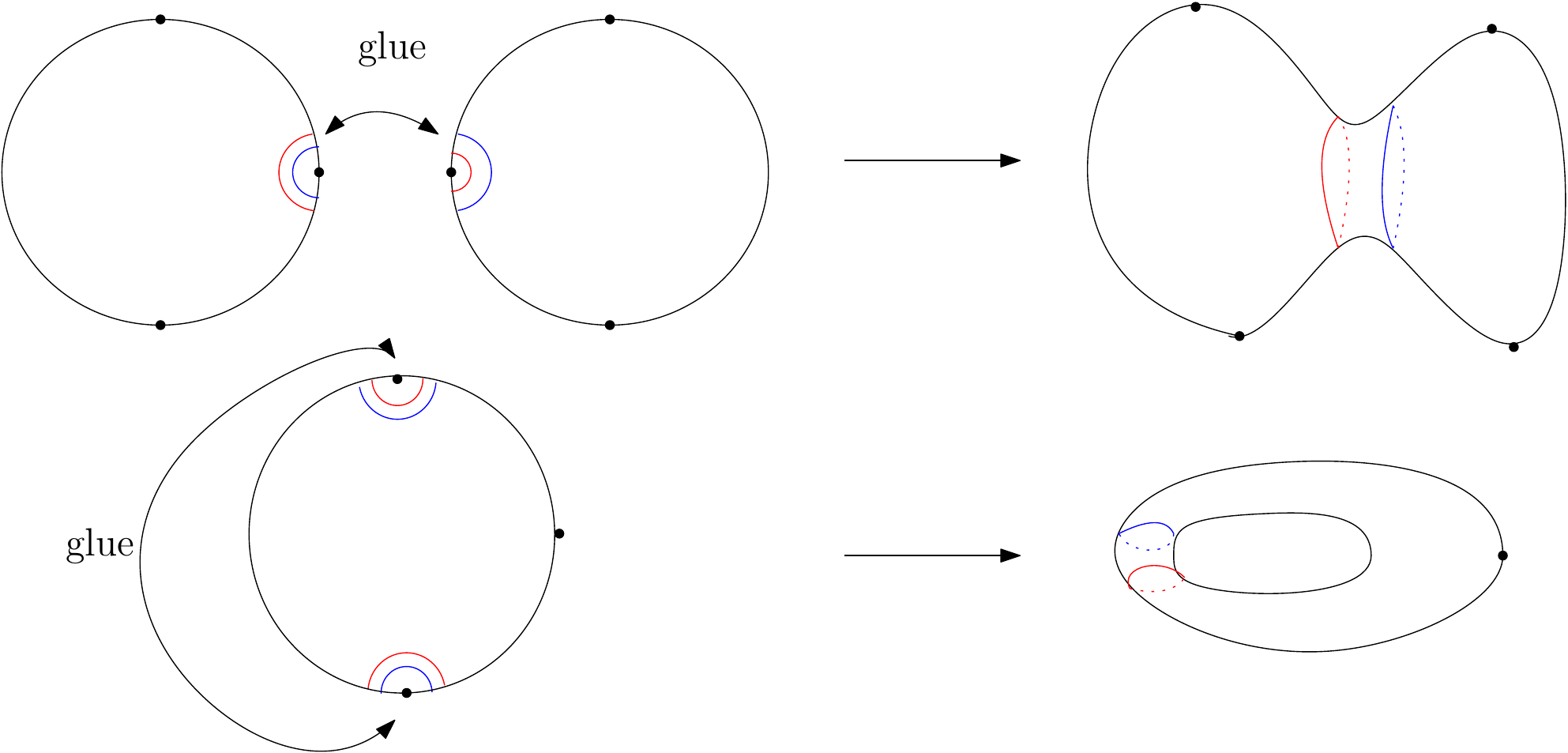}
 \caption{\label{fig:glue}\textbf{Top:} On the left, two instances of the thrice-punctured sphere with annuli neighbourhoods to be identified (curves of the same colour are identified). The resulting surface on the right: a sphere with 4 marked points. \textbf{Bottom:} Annuli neighbourhoods of the north and south pole are identified to produce a torus with one marked point.}
 \end{figure}
 
 More generally, this procedure gives a way to construct any Riemann surface of genus $\g_1+\g_2$ and $n_1+n_2$ punctures by gluing a surface of genus $\g_1$ and $n_1+1$ punctures to a surface of genus $\g_2$ and $n_2+1$ punctures (see \cite{TV} for details of this construction). Similarly a surface of genus $\g$ and $n+2$ punctures gives a surface of genus $\g+1$ and $n+2$ punctures after gluing together two distinct punctured neighbourhoods. This gives a recursive procedure to construct any Riemann surface using only instances of the thrice-punctured sphere. This construction is one of the driving ideas behind the fact that three-point functions are building blocks for CFTs.
 
 The two simple examples above are the starting point of the bootstrap programme as they require only one gluing. Physicists have predicted formulae -- called the bootstrap equations -- that compute these correlation functions using the spectrum and the structure constants. The bootstrap equation for the 4-point function on the sphere is given by\footnote{We add the superscript $^{\text{cb}}$ for ``conformal bootstrap" and to differentiate it from the path integral correlation function.}\cite{BZ}
 \begin{equation}
 \label{eq:4_point_bootstrap}
 \begin{aligned}
\langle V_{\alpha_1}(0)V_{\alpha_2}(z)&V_{\alpha_3}(1)V_{\alpha_4}(\infty)\rangle_{\S^2}^{\text{cb}}=\frac{1}{8\pi}|z|^{2(\frac{Q^2}{4}-\Delta_1-\Delta_2)}\\
&\times\int_{-\infty}^\infty|z|^{2P^2}C_\gamma(\alpha_1,\alpha_2,Q-iP)C_\gamma(Q+iP,\alpha_3,\alpha_4)|\mc{F}_P^{\alpha_{1234}}(z)|^2dP,
\end{aligned}
 \end{equation}
 where $\Delta_i=\frac{\alpha_i}{2}(Q-\frac{\alpha_i}{2})$ ($i=1,2$) and $\mc{F}_P^{\alpha_{1234}}(z)=1+o(1)$ is the so-called \emph{Virasoro conformal block} -- a holomorphic function of $z$, universal in the sense that it only depends on the $\alpha_i$'s, $P$ and $\gamma$. 

 There is a similar formula to \eqref{eq:def_1_point} for the 1-point toric function \cite[Equation (20)]{HJS}, which is the one this paper is concerned about. For a torus of modulus $\tau$, we have
 \begin{equation}
 \label{eq:torus_bootstrap}
 \langle V_\alpha(0)\rangle_\tau^{\text{cb}}=\frac{1}{2}\int_\R C_\gamma(Q-iP,\alpha,Q+iP)\left|q^{\frac{P^2}{4}}\eta(q)^{-1}\mc{H}_{\gamma,P}^\alpha(q)\right|^2dP,
 \end{equation}
where $q=e^{2i\pi\tau}$ is the \emph{nome} and $\eta(\cdot)$ is \emph{Dedekind's \^eta function}. Here the so-called \emph{elliptic conformal bloc} $\mc{H}_{\gamma,P}^{\alpha}$ admits a power series expansion in $q$
\[\mc{H}_{\gamma,P}^\alpha(q)=\frac{\eta(q)}{q^{1/24}}\left(1+\sum_{n=1}^\infty H_{\gamma,P}^{\alpha,n}q^n\right)\]
and the function in the brackets is holomorphic in $q$. The elliptic conformal blocks should be understood as a basis of solutions for the one-point toric function, and they are universal in the sense that they depend only on $\alpha,\gamma$ and $P$. We will refer to equation \eqref{eq:torus_bootstrap} as the \emph{modular bootstrap}. Again, this formula should be valid \textit{a priori} only for a primary field but we will show that it is true for $\alpha\in(0,Q)$ in the path integral framework when $\Im\tau\to\infty$.
 
At this stage, let us stress again that equations \eqref{eq:4_point_bootstrap} and \eqref{eq:torus_bootstrap} should be understood only as guesses since there is still no mathematical justification for them. In general, one way to establish rigorously the validity of the conformal bootstrap would be to recover its results using the rigorous path integral approach of DKRV. This is usually a hard matter but some works were made in this direction \cite{KRV1,KRV2}. In the first paper, the authors showed the validity of some aspects of the bootstrap approach -- namely BPZ equation and Ward identities --, while the second is a proof of the DOZZ formula.

From the point of view of probability, both the conformal blocks and the spectrum are not understood (there is not even a probabilistic interpretation of complex Liouville momenta). As we mentioned earlier, the integral in \eqref{eq:torus_bootstrap} simplifies as $\Im\tau\to\infty$, namely the conformal blocks tend to 1 and the integral freezes at $P=0$, avoiding to deal with complex insertions.

 	\subsection{Main result and outline}
 	\label{subsec:result}
Suppose $\tau=\frac{it}{\pi}$ with $t>0$ large, so that $q=e^{-2t}$ is real and small. Recall that the DOZZ formula is meromorphic and symmetric with respect to the real axis, hence 
\[C_\gamma(Q+iP,\alpha,Q-iP)\underset{P\to0}{\sim}P^2\partial^2_{\alpha_1\alpha_3}C_\gamma(Q,\alpha,Q).\]
Taking $\mc{H}_{\gamma,P}^\alpha(q)\equiv1$ uniformly in $q$ as $P\to0$, equation \eqref{eq:torus_bootstrap} gives in the limit $t\to\infty$
\begin{equation}
\label{eq:bootstrap_limit}
\begin{aligned}
\langle V_\alpha(0)\rangle_{\frac{it}{\pi}}^{\text{cb}}
&=\frac{|\eta(\frac{it}{\pi})|^{-2}}{2}\int_\R C_\gamma(Q-iP,\alpha,Q+iP)\left|q^{\frac{P^2}{4}}\mc{H}^\alpha_{\gamma,P}(q)\right|^2dP\\
&\sim\frac{|\eta(\frac{it}{\pi})|^{-2}}{2}\int_\R C_\gamma(Q-iP,\alpha,Q+iP)e^{-\frac{tP^2}{2}}\left|\mc{H}_{\gamma,P}^\alpha(q)\right|^2dP\\
&=\frac{|\eta(\frac{it}{\pi})|^{-2}}{2}t^{-1/2}\int_\R C_\gamma\left(Q-i\frac{P}{\sqrt{t}},\alpha,Q+i\frac{P}{\sqrt{t}}\right)e^{-\frac{P^2}{2}}\left|\mc{H}_{\gamma,\frac{P}{\sqrt{t}}}^\alpha(q)\right|^2dP\\
&\sim\frac{|\eta(\frac{it}{\pi})|^{-2}}{2} t^{-3/2}\partial^2_{\alpha_1\alpha_3}C_\gamma(Q,\alpha,Q)\int_\R P^2e^{-\frac{P^2}{2}}dP\\
&\sim\sqrt{\frac{\pi}{2}}\left|\eta\left(\frac{it}{\pi}\right)\right|^{-2}t^{-3/2}\partial^2_{\alpha_1\alpha_3}C_\gamma(Q,\alpha,Q).
\end{aligned}
\end{equation}
Rewriting this in terms of the modulus, we have in the limit $\Im\tau\to\infty$
\begin{equation}
\label{eq:lim_correl}
\langle V_\alpha(0)\rangle_\tau^{\text{cb}}\sim\frac{\sqrt{2}}{\pi}|\eta(\tau)|^{-2}(\Im\tau)^{-3/2}\partial_{\alpha_1\alpha_3}^2C_\gamma(Q,\alpha,Q).
\end{equation}
 
 There are two noticeable facts about the asymptotic behaviour of $\langle V_\alpha(0)\rangle_{\frac{it}{\pi}}$:
 \begin{itemize}
 \item There is a polynomial decay in $t^{-3/2}$ correcting the exponential term $|\eta(\frac{it}{\pi})|^{-2}$.
 \item The limit is expressed using the derivative of the DOZZ formula at the critical points $\alpha_1=\alpha_3=Q$.
 \end{itemize}

Throughout, we will write $\T_t$ for a torus with modulus $\tau=\frac{it}{\pi}$ and think of $t$ large. Our representation for $\T_t$ is the rectangle $\T_t:=(-t,t]\times\S^1$ with edges $\{-t\}\times\S^1$ and $\{t\}\times\S^1$ identified, and equipped with the flat metric. The reason for this choice of parametrisation is that the variable $t$ will appear as the time driving a Brownian motion. 

Let $\mc{C}_\infty:=\R\times\S^1$ be the infinite cylinder. This surface is the image of the twice-punctured sphere $\hat{\C}\setminus\{0,\infty\}$ under the change of coordinates $\psi:\mc{C}_\infty\to\hat{\C}\setminus\{0,\infty\},z\mapsto e^{-z}$. In the sequel, we will always parametrise the sphere with these coordinates. Of particular interest for us will be the correlation function $\langle V_\lambda(0)V_\alpha(1)V_\lambda(\infty)\rangle_{\S^2}$ for $\lambda,\alpha\in(0,Q)$ and $\sigma=2(\lambda-Q)+\alpha>0$. In the cylinder coordinates, these take the form \cite{KRV2}
\begin{equation}
\label{eq:def_3_point}
\langle V_\lambda(0)V_\alpha(1)V_\lambda(\infty)\rangle_{\S^2}=2\gamma^{-1}\mu^{-\frac{Q\sigma}{\gamma}}\Gamma\left(\frac{Q\sigma}{\gamma}\right)\E\left[\left(\int_{\mc{C}_\infty}e^{\gamma((\lambda-Q)|t|+\alpha G(0,t+i\theta))}dM^\gamma(t,\theta)\right)^{-\frac{Q\sigma}{\gamma}}\right],
\end{equation}
where $G$ is Green's function on $\mc{C}_\infty$ with zero average on $\{0\}\times\S^1$ and $M^\gamma$ is the chaos measure associated to a GFF on $\mc{C}_\infty$.

The negative drift $\lambda-Q$ is essential in order to make the total GMC mass finite near $\pm\infty$. On the contrary if $\lambda=Q$, the GMC mass is a.s. infinite and the correlation function vanishes. In this critical case, we consider the truncated correlation function
\begin{equation}
\label{eq:truncated}
\langle V_Q(0)V_\alpha(1)V_Q(\infty)\rangle_t=2\gamma^{-1}\mu^{-\frac{\alpha}{\gamma}}\Gamma\left(\frac{\alpha}{\gamma}\right)\E\left[\left(\int_{\mc{C}_t}e^{\gamma\alpha G(0,\cdot)}dM^\gamma(s,\theta)\right)^{-\frac{\alpha}{\gamma}}\right],
\end{equation}
where $\mc{C}_t:=(-t,t)\times\S^1$.

The truncated correlation function is just the correlation function where we integrate the GMC measure outside a small disks of radius $e^{-t}$ away from the singularities (when seen in the planar coordinates).

As for the torus $\T_t$, the 1-point function is defined by 
\begin{equation}
\label{eq:def_1_point}
\langle V_\alpha(0)\rangle_{\frac{it}{\pi}}:=2\gamma^{-1}\mu^{-\frac{\alpha}{\gamma}}\Gamma\left(\frac{\alpha}{\gamma}\right)\left(\frac{t}{\pi}\right)^{-1/2}|\eta(\frac{it}{\pi})|^{-2}\E\left[\left(\int_{\T_t}e^{\gamma\alpha G_t(0,\cdot)}dM^\gamma_t\right)^{-\frac{\alpha}{\gamma}}\right],
\end{equation}
where $G_t$ is Green's function on $\T_t$.

Our main result, stated as Theorem \ref{thm:result} below shows that we recover the same polynomial rate and the derivative DOZZ formula when working with the correlation function computed in the path integral framework.

\begin{theorem}
\label{thm:result}
Let $\langle V_\alpha(0)\rangle_{\frac{it}{\pi}}$ be the 1-point toric correlation function given by \eqref{eq:def_1_point}. Then
\begin{equation}
\label{eq:result}
\langle V_\alpha(0)\rangle_\frac{it}{\pi}\underset{t\to\infty}{\sim}\frac{3}{4\sqrt{\pi}}\left|\eta(\frac{it}{\pi})\right|^{-2}t^{-3/2}\partial^2_{\alpha_1\alpha_3}C_\gamma(Q,\alpha,Q).
\end{equation}
\end{theorem}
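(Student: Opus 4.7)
The strategy is to decompose the torus GFF into a one-dimensional ``zero-momentum'' circle-average part and an orthogonal fluctuating part, to reduce the GMC moment in \eqref{eq:def_1_point} to a truncated critical three-point function on the cylinder in the large-$t$ limit, and finally to identify the asymptotic of the latter with $\partial^2_{\alpha_1\alpha_3}C_\gamma(Q,\alpha,Q)$ through a Taylor expansion of the DOZZ formula near the critical momentum $\lambda=Q$.

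I would first write $X_t(s,\theta)=P_t(s)+Y_t(s,\theta)$, where $P_t$ is the $\theta$-average (a centered, zero-$s$-mean Gaussian process on $(-t,t]$ with the explicit one-dimensional Green function $H_t(s-s') = -\frac{|s-s'|}{2}+\frac{(s-s')^2}{4t}+\frac{t}{6}$ as covariance; in particular $\mathrm{Var}(P_t(s))=t/6$) and $Y_t$ is the fluctuation, having zero $\theta$-average on each slice. The two-dimensional Green function splits compatibly as $G_t(0,(s,\theta)) = H_t(s)+R_t(s,\theta)$ with $R_t$ converging on compact sets to the fluctuating cylinder Green function $R$; the chaos measure factors as $dM^\gamma_t = e^{\gamma P_t(s)}dM^{Y_t}(s,\theta)$; and as $t\to\infty$ one has $Y_t\to Y$ (the cylinder-fluctuating GFF) and, conditional on its value at the origin, $P_t$ converges to a two-sided Brownian motion, both on compact sets.

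Substituting these decompositions into \eqref{eq:def_1_point} and extracting the deterministic factor $e^{\gamma\alpha t/6}$ from $H_t(0)$, the inner expectation becomes a GMC moment whose integrand carries the exponentially decaying weight $e^{-\gamma\alpha |s|/2}$ (coming from the leading $-|s|/2$ part of $H_t(s)$). This decay confines the essential $s$-support to bounded regions, so the contribution from the torus fields $(P_t, Y_t, R_t)$ can be replaced, up to negligible corrections, by their cylinder limits $(B_s, Y, R)$. The expectation then matches, in the limit $t\to\infty$, the truncated critical 3-point function $\langle V_Q(0)V_\alpha(1)V_Q(\infty)\rangle_t$ of \eqref{eq:truncated}, after integrating out the ``zero mode'' $c:=P_t(0)\sim\mathcal N(0,t/6)$; this Gaussian integration, combined with the $(t/\pi)^{-1/2}$ prefactor in \eqref{eq:def_1_point}, is the probabilistic counterpart of the Gaussian integration over $P$ in the bootstrap derivation \eqref{eq:bootstrap_limit} and accounts for part of the $t^{-3/2}$ rate.

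The heart of the proof is then the asymptotic
\begin{equation*}
\langle V_Q(0)V_\alpha(1)V_Q(\infty)\rangle_t \underset{t\to\infty}{\sim} \kappa\,t^{-\beta}\partial^2_{\alpha_1\alpha_3}C_\gamma(Q,\alpha,Q),
\end{equation*}
obtained by coupling the truncated GMC integral of \eqref{eq:truncated} on $\mc C_t$ to the drifted GMC integral on $\mc C_\infty$ from \eqref{eq:def_3_point} with $\lambda = Q-\epsilon$ and $\epsilon \sim 1/t$ (so that the drift $e^{-\gamma\epsilon|s|}$ truncates at scale $1/(\gamma\epsilon)\sim t$), then Taylor-expanding the DOZZ formula around the critical point. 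The zeroth-order term $C_\gamma(Q,\alpha,Q)$ vanishes because the critical GMC integral in \eqref{eq:def_3_point} is almost surely infinite for $\lambda=Q$, and the linear-in-$\epsilon$ term vanishes by the symmetry $C_\gamma(\alpha_1,\alpha_2,\alpha_3)=C_\gamma(\alpha_3,\alpha_2,\alpha_1)$, leaving $C_\gamma(Q-\epsilon,\alpha,Q-\epsilon) = \epsilon^2\partial^2_{\alpha_1\alpha_3}C_\gamma(Q,\alpha,Q)+o(\epsilon^2)$. Combining with the polynomial contributions of the previous step yields the rate $t^{-3/2}$, and tracking all numerical prefactors (including the $|\eta(it/\pi)|^{-2}$ factor and the constants arising from the zero-mode Gaussian integration) produces the coefficient $\frac{3}{4\sqrt\pi}$.

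The principal technical obstacle is the coupling in the last step between the truncated critical GMC on $\mc C_t$ (with negative moment $-\alpha/\gamma$) and the drifted GMC on $\mc C_\infty$ (with the different negative moment $-Q\sigma/\gamma$) in the joint limit $t\to\infty$, $\epsilon\to 0$ with $\epsilon t$ of order one: the two objects have different scaling structures, and matching them uniformly requires sharp control on GMC tails near the critical threshold $\lambda = Q$, in the spirit of, and extending, the techniques developed for the proof of the DOZZ formula in \cite{KRV2}.
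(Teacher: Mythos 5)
Your overall architecture matches the paper's: reduce \eqref{eq:def_1_point} to the truncated critical three-point function \eqref{eq:truncated} on the cylinder, and identify the $t\to\infty$ asymptotics of the latter with $\partial^2_{\alpha_1\alpha_3}C_\gamma(Q,\alpha,Q)$ by taking $\lambda\to Q$ in the DOZZ formula. However, there are two genuine gaps. First, you assert that the torus GMC moment matches the cylinder one in the limit because the torus fields converge locally to their cylinder counterparts and the weight $e^{-\gamma\alpha|s|/2}$ confines the mass to compact sets. This is false as stated: the quantity $t\,\E[(\cdot)^{-r}]$ is governed by a \emph{global} barrier event for the radial process over the whole interval $(-t,t)$ (the negative moment is non-negligible only on the event that the running supremum of the radial part stays bounded), so local convergence on compacta is not enough. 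The torus radial process is $\frac{1}{\sqrt2}(B^e(|s|)+\mathrm{sign}(s)B^o(|s|))$ with $B^o$ a Brownian bridge, and the probability that it stays below $x$ on $(-t,t)$ is asymptotic to $\frac{3}{\pi}x^2/t$, versus $\frac{2}{\pi}x^2/t$ for the two-sided Brownian motion of the cylinder (Lemma \ref{lem:conditioning}). This is exactly the factor $\frac32$ of Proposition \ref{prop:spheric_equiv_toric}; your argument omits it and would yield the constant $\frac{1}{2\sqrt\pi}$ instead of $\frac{3}{4\sqrt\pi}$. Relatedly, attributing part of the $t^{-3/2}$ to a Gaussian integration over a zero mode $P_t(0)\sim\mc N(0,t/6)$ misidentifies the mechanism: in \eqref{eq:def_1_point} the zero mode has already been integrated against Lebesgue measure (producing the $\Gamma$ factor and the negative moment), the $(t/\pi)^{-1/2}$ is the determinant normalisation, and the remaining $t^{-1}$ comes from the barrier probability just described.

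Second, the ``heart of the proof'' is left as an acknowledged obstacle rather than carried out. The paper resolves the matching between $\lim_t t\,\E[Z_t(Q)^{-r}]$ and $\lim_{\lambda\to Q}(Q-\lambda)^{-2}\E[Z_\infty(\lambda)^{-r}]$ not by a joint coupling with $\epsilon\sim 1/t$, but by computing each limit separately and showing both equal $\big(\int_0^\infty\E[Z_{0,\infty}(Q)^{-r}\mid M=b]\,db\big)$ per side: the $t$-limit via conditioning on $\{\sup B<b\}$ and the Bessel(3) limit of \cite{DKRV2}, together with the fact that the infimum of a Bessel process started at $b$ is uniform on $(0,b)$; the $\lambda$-limit via the Williams path decomposition, conditioning on the exponential supremum $M\sim\mathrm{Exp}(2(Q-\lambda))$. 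Supplying some version of this (or a genuinely uniform coupling with tail control) is indispensable. A minor point: the vanishing of the linear term in $C_\gamma(Q-\epsilon,\alpha,Q-\epsilon)$ does not follow from the symmetry $\alpha_1\leftrightarrow\alpha_3$ (that only gives $\partial_{\alpha_1}C=\partial_{\alpha_3}C$); it follows from the fact that $\Upsilon_{\gamma/2}$ has simple zeros at $\alpha_1=Q$ and $\alpha_3=Q$, so $C_\gamma$ has a double zero and $C_\gamma(Q-\epsilon,\alpha,Q-\epsilon)\sim\epsilon^2\partial^2_{\alpha_1\alpha_3}C_\gamma(Q,\alpha,Q)$.
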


\begin{corollary}
\label{cor:result}
In the setting of Theorem 1.1, we have for $\tau\in\mc{M}$:
\[\langle V_\alpha(0)\rangle_\tau\underset{\Im\tau\to\infty}{\sim}\frac{3}{4\pi^2}|\eta(\tau)|^{-2}(\Im\tau)^{-3/2}\partial^2_{\alpha_1\alpha_3}C_\gamma(Q,\alpha,Q).\]
\end{corollary}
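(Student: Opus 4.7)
The plan is to compute the GMC moment appearing in \eqref{eq:def_1_point} by isolating the $\theta$-zero mode of the GFF on $\mathbb T_t$ as a Brownian-bridge-like process, and to compare the resulting expression with the cylinder representation \eqref{eq:def_3_point} of the 3-point function in a neighbourhood of the critical point $\lambda = Q$. First, decompose $X(s,\theta) = \bar X(s) + X^\perp(s,\theta)$ and $G_t = \bar G_t + \tilde G_t$ into zero-$\theta$ mode and orthogonal fluctuation. The process $\bar X$ is a centred Gaussian on the circle of length $2t$ whose covariance is the 1D Laplacian's Green function on that circle: Brownian-bridge-like, with variance proportional to $t$ at any fixed point. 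The fluctuation $X^\perp$ is independent of $\bar X$ and converges as $t\to\infty$ to the zero-mean GFF on the infinite cylinder $\mathcal C_\infty$.

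Second, by Fubini,
\[
Z_t = \int_{-t}^{t} e^{\gamma \bar X(s) - \frac{\gamma^2}{2}\mathbb{E}[\bar X(s)^2] + \gamma\alpha\bar G_t(s)} \rho_t(s)\, ds,
\]
where $\rho_t(s)$ is the Wick-renormalized transverse GMC on the $\theta$-circle at height $s$, essentially $s$-translation invariant away from $s = 0$. Away from the periodic boundary $\{|s| \approx t\}$, $\rho_t$ and $\bar G_t$ converge to their cylinder counterparts up to exponentially small errors, so that the torus GMC agrees asymptotically with the truncated cylinder GMC of \eqref{eq:truncated}. Conditioning on the value $x := \bar X(0)$ of the zero mode at the insertion (which is Gaussian of variance proportional to $t$), the conditional process $\bar X - x$ is, on bounded $s$-scales, a two-sided Brownian motion with drift $-x/t$. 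This matches the cosmological drift $\lambda - Q$ in \eqref{eq:def_3_point} upon setting $\lambda = Q - x/t$, so up to combinatorial prefactors the conditional moment equals the full 3-point function $C_\gamma(Q - x/t, \alpha, Q - x/t)$.

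Third, Taylor-expand at the critical point. Since $C_\gamma$ vanishes identically on each critical hyperplane $\{\alpha_1 = Q\}$ and $\{\alpha_3 = Q\}$ (the associated GMC mass being a.s.\ infinite) and is invariant under $\alpha_1 \leftrightarrow \alpha_3$, one has
\[
C_\gamma(Q - y, \alpha, Q - y) = y^2\, \partial^2_{\alpha_1\alpha_3} C_\gamma(Q, \alpha, Q) + O(y^3).
\]
Taking $y = x/t$ and averaging over the Gaussian law of $x$ yields $\mathbb E[x^2]/t^2 \asymp 1/t$. Combined with the prefactor $(t/\pi)^{-1/2}$ in \eqref{eq:def_1_point} this produces the $t^{-3/2}$ decay, and carrying through the explicit Gaussian second moment and Brownian-bridge normalization gives the constant $\tfrac{3}{4\sqrt\pi}$.

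The main obstacle is the second step: establishing uniformly in $t$ the coupling between the torus Brownian-bridge zero mode and the cylinder Brownian motion with small drift, while controlling the transverse GMC $\rho_t$ and matching it to its infinite-cylinder analogue. This requires uniform-in-$t$ bounds on negative GMC moments and a careful handling of an implicit analytic continuation, since the probabilistic representation \eqref{eq:def_3_point} converges only for $\lambda < Q$: the Taylor expansion must be performed on the meromorphic DOZZ side, while on the probabilistic side one approaches $\lambda = Q$ through the truncated correlations \eqref{eq:truncated}.
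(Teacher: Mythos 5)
Your proposal proves the wrong statement. Everything you write is a (heuristic) re-derivation of Theorem \ref{thm:result} for the rectangular torus $\T_t=(-t,t]\times\S^1$, i.e.\ for purely imaginary modulus $\tau=\frac{it}{\pi}$. But the corollary is stated ``in the setting of Theorem 1.1'', so that result may be assumed; the \emph{new} content of the corollary is the extension to arbitrary $\tau\in\mc{M}$ with $\Im\tau\to\infty$, in particular to tori with $\Re\tau\neq0$, where the gluing of the two ends of the cylinder involves a twist and the Green's function is no longer the one you use. Your argument never mentions $\Re\tau$ and therefore does not address this. The missing (and, in the paper, the only) ingredient is a comparison of Green's functions for two tori with the same $\Im\tau=\frac{t}{\pi}$ but different real parts: from the explicit theta-function expression of $G_\tau$ in \cite{DRV} one gets $|G_\tau(x)-G_{\frac{it}{\pi}}(x)|=O(e^{-2t})$ uniformly in $x$, and Kahane's convexity inequality (Theorem \ref{thm:convexity} and the discussion following it) then transfers this to the negative GMC moments, $\E[(\int e^{\gamma\alpha G_\tau}dM^\gamma_\tau)^{-r}]=\E[(\int e^{\gamma\alpha G_{it/\pi}}dM^\gamma_{it/\pi})^{-r}](1+O(e^{-2t}))$, after which the corollary follows from the theorem by substituting $t=\pi\Im\tau$. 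Note also that this substitution is why the constant in the corollary is $\frac{3}{4\pi^2}=\frac{3}{4\sqrt{\pi}}\cdot\pi^{-3/2}$; your final constant $\frac{3}{4\sqrt{\pi}}$ is the theorem's constant in the variable $t$ and does not match the statement to be proved.

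As a secondary remark, even as a sketch of Theorem \ref{thm:result} your second and third steps are not yet a proof and differ from the paper's mechanism in a way that makes the constant unverified. The paper does not condition on the value of the radial process at the insertion (which is in any case normalised to $0$); it conditions on the event $\{\sup_{|s|\leq t}B_t(s)<b\}$, identifies the conditional limit via the Bessel process and Williams' decomposition, and obtains the relative factor $\frac{3}{2}$ between torus and cylinder from the small-deviation asymptotics $\P(\sup B_t<x)\sim\frac{3}{\pi}\frac{x^2}{t}$ for the bridge-plus-motion radial part versus $\frac{2}{\pi}\frac{x^2}{t}$ for two-sided Brownian motion (Lemma \ref{lem:conditioning}). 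Your ``drift $-x/t$ from conditioning on $\bar X(0)=x$'' heuristic reproduces the $t^{-3/2}$ scaling but gives no control of this $\frac{3}{2}$, which is exactly where the torus differs from the cylinder.
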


\begin{remark}
The fact that we don't recover the same global overall factor as in equation \eqref{eq:lim_correl} is irrelevant since the correlation functions are defined up to multiplicative factor.
\end{remark}

	\subsection{Steps of the proof}
	\label{subsec:steps_of_proof}
There will be two steps in the proof of Theorem \ref{thm:result}. First we will compute the exact asymptotic behaviour of $\langle V_Q(0)V_\alpha(1)V_Q(\infty)\rangle_t$ as $t\to\infty$ (Proposition \ref{prop:spheric_limit}) and second we will compare $\langle V_\alpha(0)\rangle_{\frac{it}{\pi}}$ to $\langle V_Q(0)V_\alpha(1)V_Q(\infty)\rangle_t$ (Proposition \ref{prop:spheric_equiv_toric}). This is the point of using the cylinder coordinates for the sphere, as we can embed $\T_t$ into $\mc{C}_t$. Namely, we will show that negative moments of GMC on $\T_t$ and on $\mc{C}_t$ have the same asymptotic behaviour, up to some explicit constant.

\begin{proposition}
\label{prop:spheric_limit}
For all $\alpha\in(0,Q)$, 
\begin{equation}
\underset{t\to\infty}{\lim}t\langle V_Q(0)V_\alpha(1)V_Q(\infty)\rangle_t=\frac{1}{2\pi}\partial^2_{\alpha_1\alpha_3}C_\gamma(Q,\alpha,Q).
\end{equation}
\end{proposition}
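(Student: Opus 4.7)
The strategy is to relate the truncated correlation to the analytic expansion of the DOZZ formula $C_\gamma(\lambda_1,\alpha,\lambda_3)$ near the critical point $\lambda_1=\lambda_3=Q$. By the KRV theorem \cite{KRV2}, for admissible momenta $\lambda_i<Q$ the untruncated spherical three-point function equals $C_\gamma(\lambda_1,\alpha,\lambda_3)$ and admits a generalisation of \eqref{eq:def_3_point}: the probabilistic side is a negative moment of a cylinder GMC weighted by an asymmetric drift $e^{\gamma((\lambda_3-Q)s_++(\lambda_1-Q)s_-)}$, where $s_\pm=\max(\pm s,0)$, coming from the Seiberg/Girsanov shift due to the insertions at the two ends of $\mc{C}_\infty$. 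Writing $\lambda_i=Q-\epsilon_i$ with $\epsilon_i\downarrow 0$, this drift acts as a soft exponential cutoff at the scale $|s|\sim 1/\epsilon_i$; the plan is to match it, via a coupling $\epsilon_i\sim 1/t$, with the hard cutoff $|s|<t$ that defines $\langle V_Q V_\alpha V_Q\rangle_t$.

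The technical core is the radial decomposition $X(s,\theta)=B_s+Y(s,\theta)$, with $B$ the two-sided Brownian motion of circle averages and $Y$ the independent stationary lateral fluctuation. A Girsanov transform absorbs each drift $e^{-\gamma\epsilon_i s_\pm}$ into a deterministic shift of $B$ on the corresponding half-line, so that after the change of measure the negative GMC moment in the asymmetric version of \eqref{eq:def_3_point} becomes an expectation, under a drifted Brownian motion, of a functional of the unweighted cylinder chaos. A Williams-type decomposition of $B$ on $\{s>0\}$ and $\{s<0\}$ separately then identifies this quantity, in the regime $\epsilon_i\sim 1/t$, with a Brownian motion conditioned to remain in a window of size $\sim\sqrt{t}$ over $[-t,t]$, which is precisely the process controlling $\E[Z_t^{-\alpha/\gamma}]$ in \eqref{eq:truncated}. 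The decay rate $t^{-1}$ and the constant $1/(2\pi)$ then arise from the product of two Gaussian heat kernels $p_t(0,0)=(2\pi t)^{-1/2}$, one for each of the two independent half-lines, reflecting the two-sided nature of the cylinder constraint.

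Finally, the mixed derivative $\partial^2_{\alpha_1\alpha_3}C_\gamma(Q,\alpha,Q)$ is read off the DOZZ side: using the symmetry $C_\gamma(\lambda_1,\alpha,\lambda_3)=C_\gamma(\lambda_3,\alpha,\lambda_1)$ and the continuity of the prefactor $\mu^{-Q\sigma/\gamma}\Gamma(Q\sigma/\gamma)$ in $\sigma=\alpha-\epsilon_1-\epsilon_3$, only the cross coefficient $\partial^2_{\alpha_1\alpha_3}C_\gamma$ couples to the product $\epsilon_1\epsilon_3$ produced by the two-sided Girsanov shift, the pure $\epsilon_i^2$ terms and the first derivatives matching contributions from the prefactor rather than from the truncated correlator. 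The main obstacle will be the rigorous interchange of the limits $\epsilon_i\to 0$, the differentiations $\partial_{\epsilon_i}$, and the GMC expectations: this requires uniform negative-moment estimates for GMC as the drifts vanish, which I would obtain from Kahane's convexity inequality and stochastic domination against the radial Brownian decomposition, combined with an explicit control of the boundary layer where the soft and hard cutoffs differ, so that the leading constant $1/(2\pi)$ is not polluted by residual terms.
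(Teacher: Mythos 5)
Your general plan --- radial decomposition, Williams' path decomposition, comparison of drifted and truncated GMC, and reading a second derivative off the DOZZ formula --- is the right shape, and it is indeed close to the paper's proof. But two of the quantitative claims are off, and the first one would derail the computation. The coupling $\epsilon_i\sim 1/t$ is the wrong scale: what has to be matched is the \emph{height} scale of the radial Brownian paths, not the time-cutoff scale. The supremum of a drift-$(-\epsilon_i)$ Brownian motion is $\mathrm{Exp}(2\epsilon_i)$, so its typical height is $\sim 1/\epsilon_i$, while the supremum of undrifted Brownian motion on $[0,t]$ is $\sim\sqrt{t}$; matching gives $\epsilon_i\sim 1/\sqrt{t}$, not $1/t$. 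This is also what the rates demand: one shows $\E[Z_\infty(\lambda)^{-r}]\sim 4(Q-\lambda)^2\,C$ as $\lambda\to Q$ and $\E[Z_t(Q)^{-r}]\sim \frac{2}{\pi t}\,C$ as $t\to\infty$, which are of order $\epsilon^2$ and $1/t$. The paper avoids the joint-limit matching altogether: it introduces the intermediate parameter $b$ by conditioning on $\sup B=b$, shows that $\lim_{\lambda\to Q}\E[Z_\infty(\lambda)^{-r}]/(4(Q-\lambda)^2)$ and $\lim_{t\to\infty}\frac{\pi t}{2}\E[Z_t(Q)^{-r}]$ are both equal to $\lim_{b\to\infty}b^2\E_b[Z_\infty(Q)^{-r}]$ (where $\P_b$ replaces the radial motion by $b$ minus a $3d$-Bessel process started at $b$), and then uses the uniform-in-$t,\lambda$ exponential-in-$b$ decay of the conditioned negative moments to justify the interchange of limits. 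This is exactly the boundary-layer control you flag as the main obstacle, and your own remark that the conditioned radial path lives in a window of size $\sqrt{t}$ already signals that $1/\epsilon_i$ must be $\sqrt{t}$, not $t$.

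Your explanation of why the mixed derivative is the surviving coefficient is also not the right mechanism. It is not a cancellation between the Gamma prefactor and the GMC moment; it is the structural fact that $\Upsilon_{\gamma/2}(Q)=0$, which makes $C_\gamma(Q,\alpha,\cdot)\equiv 0$ and $C_\gamma(\cdot,\alpha,Q)\equiv 0$ identically as functions of the remaining argument. This forces $C_\gamma(Q,\alpha,Q)$, both first derivatives, and both pure second derivatives $\partial_{\alpha_1}^2,\partial_{\alpha_3}^2$ to vanish at $(Q,\alpha,Q)$, so that $C_\gamma(\lambda,\alpha,\lambda)=(Q-\lambda)^2\partial^2_{\alpha_1\alpha_3}C_\gamma(Q,\alpha,Q)+o((Q-\lambda)^2)$. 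In particular the asymmetric $\epsilon_1\neq\epsilon_3$ step is unnecessary --- the paper works only with the symmetric $\lambda_1=\lambda_3=\lambda\to Q$ --- and the constant $1/(2\pi)$ comes from $\P(A_{b,t})=f(b/\sqrt{t})^2\sim\frac{2b^2}{\pi t}$ together with the $\frac{1}{4(Q-\lambda)^2}$ factor on the DOZZ side, not directly from $p_t(0,0)^2$; the $b^2$ factor still has to cancel, which is what the intermediate $b$-limit achieves.
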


\begin{proposition}
\label{prop:spheric_equiv_toric}
Let $X$ be a GFF on $\mc{C}_\infty$ and $X_t$ be a GFF on $\T_t$, i.e. $X$ and $X_t$ have respective covariances $G$ and $G_t$ (Green's function with zero average on $\{0\}\times\S^1$). Let $dM^\gamma$ and $dM^\gamma_t$ be the associated chaos measures. Then for all $r>0$ and $\alpha\in(0,Q)$,
\begin{equation}
\label{eq:chaos_limit}
\underset{t\to\infty}{\lim}t\E\left[\left(\int_{\T_t}e^{\gamma\alpha G_t(0,z)}dM_t^\gamma(z)\right)^{-r}\right]=\frac{3}{2}\underset{t\to\infty}{\lim}t\E\left[\left(\int_{\mc{C}_t}e^{\gamma\alpha G(0,z)}dM^\gamma(z)\right)^{-r}\right].
\end{equation}
\end{proposition}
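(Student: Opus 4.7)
The plan is to reduce the statement to a one-dimensional comparison between a Brownian motion and a Brownian bridge on $[0,t]$, exploiting that $\T_t$ and $\mc{C}_t$ share the same underlying set $(-t,t]\times\S^1$ so that the only structural difference between $X|_{\mc{C}_t}$ and $X_t$ is the $s$-periodicity imposed by the torus. First I decompose each field into its radial (circle-average) and lateral parts: $X(s,\theta)=\phi_s+Y(s,\theta)$ on the cylinder and $X_t(s,\theta)=\psi_s+Y_t(s,\theta)$ on the torus, where $\phi$ and $\psi$ are the circle-average processes (pinned at $0$ when $s=0$ by the normalisation of $G$ and $G_t$) and $Y,Y_t$ are the lateral parts with vanishing circle-average at every level. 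Direct computation identifies $\phi$ as a two-sided Brownian motion with variance $|s|$ and $\psi$ as the 1D GFF on $\R/2t\Z$ pinned at $s=0$.

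Next I perform a symmetric--antisymmetric decomposition around $s=0$: for $s\in[0,t]$ set $\phi_{\pm s}=R_s\pm W_s$ on the cylinder and $\psi_{\pm s}=R_s\pm B_s$ on the torus. A short covariance computation on $\R$ and $\R/2t\Z$ shows that $R$ is in both cases a Brownian motion of variance $s/2$, that $W$ is an independent Brownian motion with the same variance, and that $B$ is an independent Brownian bridge on $[0,t]$ of variance $s(t-s)/(2t)$ with $B_0=B_t=0$; the bridge constraint is forced by $\psi_t=\psi_{-t}$. Consequently the symmetric part $R$ has the same law on both surfaces, and the entire structural difference is concentrated in the antisymmetric part: Brownian motion on the cylinder versus Brownian bridge on the torus.

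I then couple the lateral parts. Expanding $Y$ and $Y_t$ in the angular Karhunen--Lo\`eve basis $\{e^{in\theta}\}_{n\ne 0}$, each mode is a stationary Ornstein--Uhlenbeck-type process in $s$ with correlations decaying exponentially on scale $1/|n|$; the torus modifies these processes only by a periodic correction that is exponentially small in the bulk $|s|\ll t$. This produces a coupling of $Y_t|_{\mc{C}_t}$ to $Y|_{\mc{C}_t}$ whose contribution to the GMC moments vanishes as $t\to\infty$. Writing $J_\pm(s):=\int_0^{2\pi}e^{\gamma\alpha G(0,(\pm s,\theta))+\gamma Y(\pm s,\theta)-\frac{\gamma^2}{2}\E[Y(\pm s,\theta)^2]}d\theta$, both GMC masses reduce, up to an asymptotically negligible coupling error, to a 1D integral of the form
\[M_t=\int_0^t e^{\gamma R_s-\gamma^2 s/4}\bigl(e^{\gamma U_s-\gamma^2\sigma_U(s)/2}J_+(s)+e^{-\gamma U_s-\gamma^2\sigma_U(s)/2}J_-(s)\bigr)\,ds,\]
with $U=W$, $\sigma_U(s)=s/2$ on the cylinder, and $U=B$, $\sigma_U(s)=s(t-s)/(2t)$ on the torus, while $R$ and $J_\pm$ are common to both expressions.

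It remains to compute the ratio of the two $-r$-th moments. Conditioning on $R$ and $J_\pm$ and using the Mellin representation $x^{-r}=\Gamma(r)^{-1}\int_0^\infty u^{r-1}e^{-ux}\,du$ turns each moment into an explicit Laplace--Gaussian functional of the antisymmetric process. A Cameron--Martin reweighting compares the free endpoint $W_t$ to the pinned endpoint $B_t=0$; after integrating out the endpoint fluctuations and rescaling, the surviving boundary term produces exactly the factor $3/2$. The main obstacle, and technical heart of the proof, will be to carry this 1D comparison through the $-r$-th moment \emph{uniformly} in $R$ and the lateral randomness: one must show that the boundary layer $|s|\approx t$ and the residual lateral coupling error contribute only at lower order, so that the leading constant $3/2$ is isolated without any spurious correction.
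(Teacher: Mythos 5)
Your structural reduction is exactly the one the paper uses: the radial/lateral decomposition, the even/odd splitting of the radial part identifying the sole difference as a Brownian motion (variance $s/2$) versus a Brownian bridge (variance $s(t-s)/(2t)$) in the antisymmetric component, and the exponentially small periodisation error in the lateral covariance handled by Kahane convexity are all correct and match Lemma \ref{lem:torus_decomposition} and \eqref{eq:green_comparison}. The gap is in the final step, which is where the whole content of the proposition lies. A ``Cameron--Martin reweighting'' of the pinned endpoint against the free endpoint cannot by itself produce the factor $3/2$: if the weight $Z_t^{-r}$ were decoupled from the antisymmetric part, integrating out the endpoint would give a factor $1$ (the bridge is the motion conditioned on a centred endpoint), and the bridge is in any case singular with respect to the motion on all of $[0,t]$, so the density comparison only makes sense on $[0,t^{1-\delta}]$, where it tends to $1$ and contributes nothing. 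The factor $3/2$ arises because the measure $Z_t^{-r}\,d\P$ concentrates, at the order $1/t$ you need, on the event that the full radial process stays below a level $b$, and under that conditioning the two half-radial processes become independent $3d$ Bessel paths; the density at $0$ of the endpoint difference $\phi_t-\phi_{-t}$ is then $t^{-1/2}\int_0^\infty f_0(r)^2\,dr=\tfrac{3}{4\sqrt{\pi t}}$ with $f_0(r)=\sqrt{2/\pi}\,r^2e^{-r^2/2}$, against the free Gaussian value $\tfrac{1}{2\sqrt{\pi t}}$, and the ratio of these is $3/2$. Equivalently, in the paper the constant is read off from $\P(\sup_{[-t,t]}B_t(s)<b)\sim\tfrac{3}{\pi}\tfrac{b^2}{t}$ versus $\tfrac{2}{\pi}\tfrac{b^2}{t}$ for the cylinder (Lemma \ref{lem:conditioning}).

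Concretely, your sketch is missing (i) the identification of the conditioned radial law as a Bessel process and the computation of $\int_0^\infty f_0(r)^2\,dr$, without which ``the surviving boundary term produces exactly $3/2$'' is an assertion, not a derivation; and (ii) the uniformity needed to interchange the limits in $b$ (the conditioning level) and $t$, which the paper obtains from the exponential decay in $b$ of $\E[Z_t^{-r}\mid \sup B=x]$ uniformly in $t$ (cf.\ \eqref{eq:exp_decay} and the discussion around \eqref{eq:lim_bridge}). You correctly flag this uniformity as ``the technical heart,'' but the Mellin/Laplace route you propose does not supply it: the Laplace transform of the GMC mass conditioned on $R$ and $J_\pm$ is not an explicit Gaussian functional, so that step as written does not go through. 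Replacing it by the conditioning on $A_{b,t}$ and $\tilde A_{b,t}$ and proving the analogue of Lemma \ref{lem:conditioning} would close the argument.
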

%
%
We will prove these propositions in Section \ref{sec:proof}. For now, we use Propositions \ref{prop:spheric_limit} and \ref{prop:spheric_equiv_toric} to prove Theorem \ref{thm:result} and Corollary \ref{cor:result}.

\begin{proof}[Proof of Theorem \ref{thm:result}]
Using Propositions \ref{prop:spheric_limit} and \ref{prop:spheric_equiv_toric}, we have
\begin{equation}
\begin{aligned}
\langle V_\alpha(0)\rangle_{\frac{it}{\pi}}
&=\sqrt{\pi}\frac{2\gamma^{-1}\mu^{-\frac{\alpha}{\gamma}}\Gamma\left(\frac{\alpha}{\gamma}\right)}{t^{1/2}|\eta(\frac{it}{\pi})|^2}\E\left[\left(\int_{\T_t}e^{\gamma\alpha G_t(0,\cdot)}dM^\gamma_t\right)^{-\frac{\alpha}{\gamma}}\right]\\
&\underset{t\to\infty}{\sim}\frac{3\sqrt{\pi}}{2}t^{-1/2}|\eta(\frac{it}{\pi})|^{-2}\langle V_Q(0)V_\alpha(1)V_Q(\infty)\rangle_t\\
&\underset{t\to\infty}{\sim}\frac{3}{4\sqrt{\pi}}t^{-3/2}|\eta(\frac{it}{\pi})|^{-2}\partial^2_{\alpha_1\alpha_3}C_\gamma(Q,\alpha,Q).
\end{aligned}
\end{equation}
In particular we recover the asymptotic formula of equation \eqref{eq:bootstrap_limit} up to an explicit global multiplicative constant.

\end{proof}

\begin{proof}[Proof of Corollary \ref{cor:result}]
In this proof and this proof only, we make change the embedding and embed all tori in the square $[0,1]^2$ as in \cite{DRV}. We only need to compare the negative moments of GMC for tori with moduli $\tau,\tau'$ such that $\Im\tau=\Im\tau'$ and show that they have the same asymptotic behaviour as $\Im\tau\to\infty$. 

Let $\tau\in\mc{M}$ with $\Im\tau=\frac{t}{\pi}$.  Let $G_\tau$ be Green's function on the torus $\T_\tau$ of modulus $\tau$ and set $G_\tau(x):=G_\tau(0,x)$. It is readily seen from \cite[Equation (3.4)]{DRV} that
\[|G_\tau(x)-G_\frac{it}{\pi}(x)|=O(e^{-2t})\]
uniformly in $x\in\T_\tau$. Now let $dM^\gamma_\tau$ and $dM^\gamma_\frac{it}{\pi}$ be the GMC measures of a GFF on $\T_\tau$ and $\T_\frac{it}{\pi}$ respectively. By Kahane's convexity inequality (see Section \ref{subsubsec:gmc}) we have for all $r>0$
\begin{equation}
\E\left[\left(\int_{\T_\tau}e^{\gamma\alpha G_\tau(0,\cdot)}dM^\gamma_\tau\right)^{-r}\right]=\E\left[\left(\int_{\T_\frac{it}{\pi}}e^{\gamma\alpha G_\frac{it}{\pi}(0,\cdot)}dM^\gamma_{\frac{it}{\pi}}\right)^{-r}\right](1+O(e^{-2t})).
\end{equation}
This concludes the proof.
\end{proof}

The rest of this paper is devoted to proving Propositions \ref{prop:spheric_limit}, \ref{prop:spheric_equiv_toric}. This will be done is Section \ref{sec:proof} while Section \ref{sec:background} gives the necessary probabilistic background needed for the proofs.

\bigskip 

\paragraph*{\textbf{Acknowledgements:}} 
The author is grateful R\'emi Rhodes for bringing him to the study LCFT near the boundary of moduli spaces, and to Vincent Vargas for telling him about the modular bootstrap equation. We thank both of them for many interesting discussions. We also thank the anonymous referee for their careful and patient reading of the manuscript.

\section{Background}
\label{sec:background}
In this section, we recall the definitions of the basic objects needed to define the correlation functions \eqref{eq:def_3_point} and \eqref{eq:def_1_point} (namely the GFF and GMC) and we give a derivation of the expression of these correlation functions.
	\subsection{Gaussian Free Field}
	\label{subsubsec:gff}
We give a basic introduction to the Gaussian Free Field (GFF) on the complete cylinder $\mc{C}_\infty$ and the torus $\T_t$ (we refer the reader to \cite{Du,DMS,DKRV1,DRV}). 

To begin with, let us consider the case of $\mc{C}_\infty$ endowed with the flat metric. Let $H^1_0(\mc{C}_\infty)$ be the set of functions $f:\mc{C}_\infty\to\R$ with weak derivative in $L^2(\mc{C}_\infty)$ and such that $\int_0^{2\pi}f(0,\theta)d\theta=0$. Then the (non-negative) Laplacian $-\frac{1}{2\pi}\Delta$ has a well defined inverse $G:L^2(\mc{C}_\infty)\to H^1_0(\mc{C}_\infty)$ called the \emph{Green function}. It has a kernel satisfying for all $x\in\mc{C}_\infty$
\begin{equation}
\left\lbrace
\begin{aligned}
&-\frac{1}{2\pi}\Delta G(x,\cdot)=\delta_x\\
&\int_0^{2\pi}G(x,i\theta)d\theta=0.
\end{aligned}
\right.
\end{equation} 
The GFF on $\mc{C}_\infty$ is the Gaussian field $X$ on whose covariance kernel is given by Green's function
\[\E[X(x)X(y)]=G(x,y).\]
This is done at the formal level, since Green's function blows up logarithmically near the diagonal. However, it is possible to show that such a field exists and that it almost surely lives in $H^{-1}_0(\mc{C}_\infty)$. Hence the GFF on $\mc{C}_\infty$ is a distribution on $\mc{C}_\infty$ (and not a function).

We can define $H^1_0(\T_t)$ similarly as the space of functions $f:\T_t\to\R$ with weak derivatives in $L^2(\T_t)$ and vanishing mean on $\T_t$. The Laplacian $-\frac{1}{2\pi}\Delta_t$ on $\T_t$ also has a Green's function $G_t:L^2(\T_t)\to H^1_0(\T_t)$. 
		
As explained in Section \ref{subsec:path_integral}, the formal measure $e^{-\frac{1}{4\pi}\int_M|\nabla X|^2}DX$ should be interpreted as a Gaussian measure. To fix ideas, let us consider the case of the torus $\T_t$. Then the map 
\[(f,g)\mapsto-\frac{1}{2\pi}\int_{\T_t}\Delta_t f\cdot g=:\langle f,g\rangle_{\nabla}\] 
defines an inner-product on $H^1_0(\T_t)$ that we call the \emph{Dirichlet energy}. We write $\norm{\cdot}_{\nabla}$ for the associated norm. By analogy with the finite dimensional case, we want to interpret the density $e^{-\frac{1}{2}\norm{X}_{\nabla}^2}DX$ as that of a centred Gaussian random variable with covariance kernel given by the inverse of $-\frac{1}{2\pi}\Delta$, i.e. Green's function $G_t$. This is nothing but the GFF of the previous paragraph. To keep with the analogy with the finite dimensional case, the partition function of the GFF (i.e. the ``normalising constant") is given by \cite{Gaw} 
\begin{equation}
Z_{\mathrm{GFF}}(t):=\det(-\Delta_t)^{1/2}=\frac{t}{\pi}|\eta(\frac{it}{\pi})|^2,
\end{equation}
where $\det(-\Delta_t)$ is the z\^eta regularised determinant of the Laplacian (see \cite[Section 1]{OPS} for a general definition and \cite{Gaw} p10 for the value on the torus).

The GFF on $\T_t$ can be constructed using an orthonormal basis of $L^2(\T_t)$ of eigenfunctions of $-\Delta_t$. If $(f_n)_{n\geq0}$ is such a basis with associated eigenvalues $0=\lambda_0<\lambda_1,..,\leq\lambda_n...$, then $(\sqrt{\frac{2\pi}{\lambda_n}}f_n)_{n\geq1}$ is an orthonormal basis of $H^1_0(\T_t)$ and we set
\[X_t:=\sqrt{2\pi}\sum_{n\geq 1}\frac{\alpha_n}{\sqrt{\lambda_n}}f_n,\]
where $(\alpha_n)_{n\geq 1}$ is a sequence of i.i.d. normal random variables. It can be shown that this formal series indeed converges almost surely in $H^{-1}_0(\T_t)$ \cite[Section 3.2]{DRV}. 

As such, the constant coefficient of the GFF (a.k.a. the \emph{zero mode}) depends on the choice of the background metric, since we impose a vanishing mean in the flat metric. In order to get rid of this dependence, we complete the definition of the field by ``sampling" the constant coefficient with Lebesgue measure (see the discussion in \cite[Section 2.2]{DKRV1}). Informally, we can interpret the zero mode as a Gaussian random variable with variance $1/\lambda_0=\infty$ since $\sqrt{\frac{2\pi}{\lambda}}$ times the law of an $\mc{N}(0,\lambda^{-1})$ converges vaguely to Lebesgue as $\lambda\to0$. So we arrive at the field decomposition
\[X=X_t+\frac{c}{\sqrt{t/\pi}}\]
and the final intepretation is that for all continuous functional $F:H^{-1}_0(\T_t)\to\R$, we set\footnote{We add a factor 2 to conform with \cite{KRV2}}
\begin{equation}
\begin{aligned}
\int F(X)e^{-\frac{1}{4\pi}\int_{\T_t}|\nabla X|^2}DX
&=2\:\det(-\Delta_t)^{-1/2}\int_\R\E\left[F(X_t+\frac{c}{\sqrt{t/\pi}})\right]dc\\
&=2(\frac{t}{\pi})^{-1/2}|\eta(\frac{it}{\pi})|^{-2}\int_\R\E[F(X_t+c)]dc.
\end{aligned}
\end{equation}
This formula explains the $t^{-1/2}|\eta(\frac{it}{\pi})|^{-2}$ appearing in the asymptotic formula of Theorem \ref{thm:result}. Applying this to a regularisation of the vertex operator $V_\alpha(0)$ leads to the expression \eqref{eq:def_1_point} of the correlation function $\langle V_\alpha(0)\rangle_{\frac{it}{\pi}}$ \cite[Theorem 4.3]{DRV}.

For the torus, the natural eigenbasis of $L^2(\T_t)$ is given by the functions
\begin{equation}
\begin{aligned}
&f_{n,m}^{ee}(s,\theta):=\frac{1}{\sqrt{(1+\ind_{n=0})(1+\ind_{m=0})\pi t}}\cos(\frac{n\pi s}{t})\cos(m\theta)\\
&f_{n,m}^{eo}(s,\theta):=\frac{1}{\sqrt{(1+\ind_{n=0})\pi t}}\cos(\frac{n\pi s}{t})\sin(m\theta)\\
&f_{n,m}^{oe}(s,\theta):=\frac{1}{\sqrt{(1+\ind_{m=0})\pi t}}\sin(\frac{n\pi s}{t})\cos(m\theta)\\
&f_{n,m}^{oo}(s,\theta):=\frac{1}{\sqrt{\pi t}}\sin(\frac{n\pi s}{t})\sin(m\theta),
\end{aligned}
\end{equation}
and the eigenvalue associated to the eigenfunction $f_{m,n}^{ee,eo,oe,oo}$ is $\lambda_{n,m}:=\frac{n^2\pi ^2}{t^2}+m^2$. Then we can set
\begin{equation}
\label{eq:def_gff}
X_t:=\sqrt{2\pi}\sum_{n,m\neq(0,0)}\frac{\alpha_{n,m}^{ee}}{\sqrt{\lambda_{n,m}}}f_{n,m}^{ee}+\frac{\alpha_{n,m}^{eo}}{\sqrt{\lambda_{n,m}}}f_{n,m}^{eo}+\frac{\alpha_{n,m}^{oe}}{\sqrt{\lambda_{n,m}}}f_{n,m}^{oe}+\frac{\alpha_{n,m}^{oo}}{\sqrt{\lambda_{n,m}}}f_{n,m}^{oo},
\end{equation}
where $\alpha_{n,m}^{ee,eo,oe,oo}$ are i.i.d. centred normal random variables. 

An immediate consequence of this decomposition is that we can sample $X_t$ as follows
\begin{enumerate}
\item Sample a GFF $X_t^\mathrm{D}$ with zero (a.k.a. Dirichlet) boundary conditions\footnote{We refer the reader to \cite[Section 5]{Beb} for an introduction to different types of boundary conditions.} on the cylinder $(0,t)\times\S^1$
\item Sample an independent GFF $X_t^\mathrm{N}$ with free (a.k.a. Neumann) boundary conditions on the cylinder $(0,t)\times\S^1$.
\item For all $(s,\theta)\in(-t,t)\times\S^1$, set $X_t(s,\theta):=\frac{X_t^\mathrm{N}(|s|,\theta)+\mathrm{sign}(s)X_t^\mathrm{D}(|s|,\theta)}{\sqrt{2}}$.
\end{enumerate}
We call this decomposition the \emph{odd/even decomposition of fieds}, which is based on the orthogonal decomposition $H^1_0(\mc{C}_t)=H^{1,e}_0(\mc{C}_t)\oplus H^{1,o}_0(\mc{C}_t)$ where $H^{1,e}_0(\mc{C}_t),H^{1,o}_0(\mc{C}_t)$ are respectively the subspaces of even and odd functions with respect to $s\in(-t,t)$. The nice property of this decomposition is that we can view the GFF $X_t$ on $\T_t$ as a GFF on $\mc{C}_t$ whose odd part is a GFF with zero (Dirichlet) boundary conditions and whose even part is a GFF with free (Neumann) boundary conditions (see \cite[Section 5.1]{Beb} for a discussion of this decomposition).

 Let us now introduce the \emph{radial/angular decomposition of fields} \cite{DMS,KRV2}, which is based on the orthogonal decomposition $H^1_0(\mc{C}_t)=H^{1,R}_0(\mc{C}_t)\oplus H^{1,A}_0(\mc{C}_t)$ (for all $t\in(0,\infty]$) where 
\begin{equation} 
\begin{aligned}
 &H^{1,R}_0(\mc{C}_t)=\{f\in H^1_0(\mc{C}_t),\;f(s,\cdot)\text{ is constant on }\S^1\text{ for all }s\in(-t,t)\}\\
 &H^{1,A}_0(\mc{C}_t)=\{f\in H^1_0(\mc{C}_t),\;\forall s\in(-t,t)\; \int_{\S^1}f(s,\theta)d\theta=0.\}
 \end{aligned}
\end{equation} 
  For a field $X$ on $\mc{C}_\infty$, we will write $X_0(t):=\frac{1}{2\pi}\int_{\S^1}X(t,\theta)d\theta$ for its mean on the circle $\{t\}\times\S^1$ for all $t\in\R$. Viewed in the planar coordinates, $X_0(t)$ is the mean value of $X$ on the circle of radius $e^{-t}$ about 0. 

 Now let $X$ be a GFF on $\mc{C}_{\infty}$, normalised such that $X_0(0)=0$. Then, from \cite[Lemmata 4.2-3]{DKRV2}, we can write $X(t,\theta)=B_t+Y(t,\theta)$ with $B$ independent of $Y$ and
\begin{enumerate}
\item $B_t=X_0(t)$ has the law of a standard two-sided Brownian motion on $\R$.
\item $Y$ is a $\log$-correlated field with covariance kernel
\begin{equation}
\label{eq:covar_y}
H(t,\theta,t',\theta'):=\E[Y(t,\theta)Y(t',\theta')]=\log\frac{e^{-t}\vee e^{-t'}}{|e^{-t+i\theta}-e^{-t'+i\theta'}|}.
\end{equation}
\end{enumerate}

For a GFF $X_t$ on $\T_t$, the radial part is given by the sum of the radial parts of $X_t^\mathrm{D}$ and $X_t^\mathrm{N}$. Hence $(\sqrt{2}B_s)_{0\leq s\leq t}$ is the independent sum of a Brownian bridge and a standard Brownian motion with its mean subtracted.

	\subsection{Gaussian Multiplicative Chaos}
	\label{subsubsec:gmc}
Recall that a GFF $X$ (on $\mc{C}_\infty$ or $\T_t$) is only defined as a distribution, so the exponential term $e^{\gamma X}$ is ill-defined \textit{a priori}. However it is possible to make sense of the measure $e^{\gamma X(s,\theta)}dsd\theta$ using a regularising procedure based on Kahane's theory of Gaussian Multiplicative Chaos (GMC) (see \cite{RVb,Be} for more detailed reviews). We use the regularisation called the circle average: let $X_\varepsilon(x)$ be a jointly continuous version of the average of the field on the circle of (Euclidean) radius $\varepsilon$ about $x\in M$ \cite[Section 2]{Beb}. From \cite[Proposition 2.6]{DKRV1} and \cite[Proposition 3.8]{DRV}, the sequence of measures 
\begin{equation}\label{eq:regularised_chaos}
dM^\gamma_\varepsilon(x):=e^{\gamma X_\varepsilon(x)-\frac{1}{2}\gamma^2\E[X_\varepsilon(x)^2]}dx
\end{equation}
converges in probability as $\varepsilon\to0$ (in the sense of weak convergence of measures) to an almost surely non-trivial measure $dM^\gamma$ with no atoms, for all $\gamma\in(0,2)$. Moreover, the result of \cite[Theorem 1.1]{Be} together with universality of the limit (see the discussion in \cite{Beb} after Theorem 2.1) shows that $M^\gamma_\varepsilon(D)\to M^\gamma(D)$ in $L^1$ as $\varepsilon\to0$ for all Borel set $D$.
%


An important tool in GMC is  Kahane's convexity inequality, which we will use in form of Theorem \ref{thm:convexity} below. In this form, this theorem is a consequence of \cite[Theorem 2.1]{RVb} (see the discussion after Theorem 2.3 of \cite{RVb}).
\begin{theorem}\cite[Theorem 2.1]{RVb}\label{thm:convexity}
Let $X$ and $Y$ be two continuous Gaussian fields on $D\subset\C$ such that for all $x,y\in D$
\[\E[X(x)X(y)]\leq\E[Y(x)Y(y)].\]
Then for all convex function $F:\R_+\to\R$ with at most polynomial growth at infinity,
\[\E\left[F\left(\int_De^{\gamma X(x)-\frac{\gamma^2}{2}\E[X(x)^2]}dx\right)\right]\leq\E\left[F\left(\int_De^{\gamma Y(x)-\frac{\gamma^2}{2}\E[Y(x)^2]}dx\right)\right].\]
\end{theorem}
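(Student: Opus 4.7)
The plan is to use Kahane's classical Gaussian interpolation argument. After enlarging the probability space so that $X$ and $Y$ are independent, I set $Z_t(x) := \sqrt{1-t}\,X(x) + \sqrt{t}\,Y(x)$ for $t \in [0,1]$. Then $Z_0 \stackrel{d}{=} X$ and $Z_1 \stackrel{d}{=} Y$, and the covariance
\[
C_t(x,y) := \E[Z_t(x)Z_t(y)] = (1-t)\,\E[X(x)X(y)] + t\,\E[Y(x)Y(y)]
\]
satisfies $\partial_t C_t(x,y) = \E[Y(x)Y(y)] - \E[X(x)X(y)] \geq 0$ by hypothesis. Writing $I_t := \int_D e^{\gamma Z_t(x) - \frac{\gamma^2}{2}C_t(x,x)}\,dx$, the problem reduces to showing that $t \mapsto \E[F(I_t)]$ is non-decreasing on $[0,1]$.

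Assuming first that $F$ is smooth with bounded derivatives, I would differentiate under the expectation to obtain
\[
\frac{d}{dt}\E[F(I_t)] = \E\Bigl[F'(I_t) \int_D e^{\gamma Z_t(x) - \frac{\gamma^2}{2}C_t(x,x)}\bigl(\gamma\,\partial_t Z_t(x) - \tfrac{\gamma^2}{2}\partial_t C_t(x,x)\bigr)\,dx\Bigr].
\]
The crucial step is Gaussian integration by parts applied to the factor $\partial_t Z_t(x)$. Since $\partial_t Z_t$ is jointly Gaussian with the field $\{Z_t(y)\}_{y \in D}$ with covariance $\E[\partial_t Z_t(x)Z_t(y)] = \tfrac{1}{2}\partial_t C_t(x,y)$, Stein's lemma produces two contributions: a diagonal term from the exponential weight at $x$ equal to $\tfrac{\gamma}{2}\partial_t C_t(x,x)$ that cancels exactly the explicit drift correction, and an off-diagonal term from the functional dependence of $F'(I_t)$ on $\{Z_t(y)\}_{y\in D}$. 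After this cancellation one obtains
\[
\frac{d}{dt}\E[F(I_t)] = \frac{\gamma^2}{2}\iint_{D\times D} \partial_t C_t(x,y)\,\E\bigl[F''(I_t)\,e^{\gamma Z_t(x) - \frac{\gamma^2}{2}C_t(x,x)}\,e^{\gamma Z_t(y) - \frac{\gamma^2}{2}C_t(y,y)}\bigr]\,dx\,dy,
\]
which is non-negative because $\partial_t C_t \geq 0$ by hypothesis, $F'' \geq 0$ by convexity, and the exponential weights are positive. Integrating in $t$ yields the desired inequality.

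To remove the smoothness hypothesis on $F$, I would approximate a general convex $F$ with polynomial growth by smooth convex functions $F_n$ (e.g.\ mollifications) such that $F_n \to F$ locally uniformly with a common polynomial bound, and pass to the limit by dominated convergence, using that positive moments of $I_t$ of every order are finite when $D$ is bounded and $X,Y$ are continuous Gaussian fields.

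The main obstacle is the rigorous justification of Gaussian integration by parts and of the interchange of derivative, integral, and expectation in infinite dimensions, since $Z_t$ is only a continuous Gaussian field, not a smooth function of finitely many Gaussian coordinates. The standard remedy is to discretise: partition $D$ into small cells, replace $I_t$ by the Riemann sum $I_t^N := \sum_k |D_k|\,e^{\gamma Z_t(x_k) - \frac{\gamma^2}{2}C_t(x_k,x_k)}$, carry out the interpolation argument at the finite-dimensional level where Stein's lemma is elementary, and then let $N \to \infty$, using sample-path continuity and dominated convergence to transfer the monotonicity of $t \mapsto \E[F(I_t^N)]$ to the continuum limit.
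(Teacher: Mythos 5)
This theorem is imported by citation from \cite{RVb}; the paper offers no proof of its own, only the remark that the stated form follows from Theorem 2.1 there. Your proposal is the standard Kahane interpolation argument — the covariance identity $\E[\partial_t Z_t(x)Z_t(y)]=\tfrac12\partial_tC_t(x,y)$, the cancellation of the drift correction against the diagonal term in the Gaussian integration by parts, the resulting non-negative $F''$ term, and the smoothing/discretisation to make it rigorous are all correct — so it coincides with the proof the paper defers to.
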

In practice, we can apply this result to the GMC measures of $\log$-correlated fields (like the GFF) using the regularising procedure. Suppose $X,Y$ are $\log$-correlated fields with $|\E[X(x)X(y)-\E[Y(x)Y(y)]|\leq\varepsilon$ for all $x,y$, and write $M^\gamma,N^\gamma$ for their respective chaos measure. In particular we have
\[\E[X(x)X(y)]\leq \E[Y(x)Y(y)]+\varepsilon.\]
Notice that the field $Z(x)=Y(x)+\sqrt{\varepsilon}\delta$ -- with $\delta\sim\mc{N}(0,1)$ independent of everything -- has covariance kernel $\E[Y(x)Y(y)]+\varepsilon$. The argument of \cite{RVb} in the discussion following Theorem 2.3 shows that we can apply Kahane's inequality in the limit, and we get:
\[\E[M^\gamma(D)^{-r}]\leq \E[e^{-r\gamma\sqrt{\varepsilon}\delta}N^\gamma(D)^{-r}]=e^{\frac{1}{2}\gamma^2r^2\varepsilon}\E[N^\gamma(D)^{-r}].\]
By symmetry of the roles of $X$ and $Y$, the converse inequality is also true, so that in the end
\[\E[M^\gamma(D)^{-r}]=\E[N^\gamma(D)^{-r}](1+O_{\varepsilon\to0}(\varepsilon)).\]

	\subsection{Derivation of the correlation function}
	\label{subsec:derivation}
Using the GFF and GMC, we give a short derivation of the correlation function $\langle V_\alpha(0)\rangle_\frac{it}{\pi}$ on the torus. In \cite{DRV}, this object is constructed so as to satisfy some invariance properties, e.g. the Weyl anomaly (Theorem 4.1) and modular invariance (Theorem 4.6). Hence, as in \cite[Section 2.2]{KRV2}, we suppose that we have fixed the geometric setting described above (Green's function $G_t$, representative of the modulus $\tau=\frac{it}{\pi}$) and take the invariance properties as part of the definition.

 We renormalise the vertex operator $V_\alpha(0)$ by setting 
\begin{equation}\label{eq:regularised_vertex}
V_{\alpha,\varepsilon}(0):=e^{\alpha X_\varepsilon(0)-\frac{\alpha^2}{2}\E[X_\varepsilon(0)^2]}.
\end{equation} 
Applying Girsanov's theorem, then taking $\varepsilon\to0$ and making the change of variables $u=e^{\gamma c}$ we can set:
 
\begin{equation}\label{eq:def_correl}
\begin{aligned}
\langle V_\alpha(0)\rangle_{\frac{it}{\pi}}
&:=\underset{\varepsilon\to0}{\lim}\,2(\frac{t}{\pi})^{-1/2}|\eta(\frac{it}{\pi})|^{-2}\int_\R e^{\alpha c}\E\left[e^{\alpha X_\varepsilon(0)-\frac{\alpha^2}{2}\E[X_\varepsilon(0)^2]}\exp(-\mu e^{\gamma c}M^\gamma(\T_t))\right]dc\\
&=2(\frac{t}{\pi})^{-1/2}|\eta(\frac{it}{\pi})|^{-2}\int_\R e^{\alpha c}\E\left[\exp(-\mu e^{\gamma c}\int_{\T_t}e^{\gamma\alpha G_t(0,\cdot)}dM^\gamma)\right]dc\\
&=2(\frac{t}{\pi})^{-1/2}|\eta(\frac{it}{\pi})|^{-2}\gamma^{-1}\mu^{-\frac{\alpha}{\gamma}}\Gamma(\frac{\alpha}{\gamma})\E\left[\left(\int_{\T_t}e^{\gamma\alpha G_t(0,\cdot)}dM^\gamma\right)^{-\frac{\alpha}{\gamma}}\right].
\end{aligned}
\end{equation}

\begin{remark}
At first glance, our choices of renormalisation in \eqref{eq:regularised_chaos} and \eqref{eq:regularised_vertex} may seem different than the ones in \cite{DRV} where the renormalisation factors are $\varepsilon^\frac{\gamma^2}{2}$ and $\varepsilon^\frac{\alpha^2}{2}$ respectively. However, notice that for the lateral noise $Y$ on the infinite cylinder $\mc{C}_\infty$, we have $\E[Y^2_\varepsilon(x)]=\log\frac{1}{\varepsilon}+o(1)$ with $o(1)$ uniform, so we get the same limit by Kahane's inequality. Moreover, our parametrisation is made precisely to have the Green function for the lateral noise $Y_t$ on $\T_t$ converging in a suitable sense to that of $Y$ as $t\to\infty$ (see Section \ref{subsec:spheric_equiv_toric} and in particular \eqref{eq:green_comparison}).
\end{remark}

%

\section{Proofs}
\label{sec:proof}

	\subsection{Proof of Proposition \ref{prop:spheric_limit}}
		\label{subsec:spheric_limit}
		We will start by showing in Section \ref{subsubsec:background} that $t\langle V_Q(0)V_\alpha(1)V_Q(\infty)\rangle_t$ has a limit $t\to\infty$ and find its expression in terms of the derivative DOZZ formula in Section \ref{subsubsec:characterisation}. Section \ref{subsubsec:heuristic} gives a heuristic explanation for this limit.

		\subsubsection{Background and notations}\label{subsubsec:background}
Let $g(z)=|z|_+^{-4}$ be the cr\^epe metric on $\hat{\C}$. Under the conformal change of coordinates $\psi:\mc{C}_\infty\to\hat{\C}$ defined by $\psi(z)=-\log z$, we get the metric $g_{\psi}(t,\theta)=e^{-2|t|}$ on the infinite cylinder.

Let $X(t,\theta)=B_t+Y(t,\theta)$ be a GFF on $\mc{C}_\infty$. By conformal covariance \cite[Equation (3.13)]{GRV}, taking the chaos of $X$ with respect to $g_\psi$ is the same as taking the chaos of $X(t,\theta)-Q|t|$ with respect to Lebesgue measure. Equivalently, this is the same as taking the radial part of the GFF to be the drifted Brownian motion $B_t-Q|t|$. Notice that the angular part is unchanged in this process and we write $dN^\gamma$ for the GMC measure of $Y$. We will be interested in the negative moments of GMC. To this end, for all $t<t'$, we introduce the random variable
\begin{equation}\label{eq:def_Z}
Z_{t,t'}(\lambda):=\int_{[t,t']\times\S^1}e^{\gamma(B_s+(\lambda-Q)|s|+\alpha G(0,s+i\theta))}dN^\gamma(s,\theta),
\end{equation}
where $r>0$ is fixed throughout the proof and recall $G(\cdot,\cdot)$ is Green's function on $\mc{C}_\infty$. For notational convenience, we also define $Z_t(\lambda):=Z_{-t,t}(\lambda)$.

We can see in the expression of $Z_t(\lambda)$ that the Brownian motion has a drift that makes the chaos measure integrable when $|t|\to\infty$. The value of the drift is precisely linked to the strength of the singularity and in vanishes when $\lambda=Q$, causing the mass to explode and the negative moments to vanish, so we have $Z_\infty(Q):=\underset{t\to\infty}{\lim}Z_t(Q)=\infty$ a.s., and $\underset{t\to\infty}{\lim}\E[Z_t(Q)^{-r}]=0$ \cite[Theorem 3.2]{DKRV1}. On the other hand, $Z_t(\lambda)$ converges a.s. to a positive, finite random variable $Z_\infty(\lambda)$ for all $\lambda\in(Q-\frac{\alpha}{2},Q)$, and all negative moments of $Z_\infty(\lambda)$ are positive and finite. Furthermore, the DOZZ formula states that for all $\lambda\in(Q-\frac{\alpha}{2},Q)$, we have 
\[C_\gamma(\lambda,\alpha,\lambda)=2\gamma^{-1}\mu^{-\frac{\alpha}{\gamma}}\Gamma\left(\frac{\alpha}{\gamma}\right)\E\left[Z_\infty(\lambda)^{-\frac{\alpha}{\gamma}}\right].\]

The rate at which the negative moments of $Z_t(Q)$ vanish with $t$ was studied in \cite{DKRV2} where it was shown that $t\E[Z_t(Q)^{-r}]$ has a non-trivial limit as $t\to\infty$ (Theorem 2.1 with $k=2$ and $t=\log\frac{1}{\varepsilon}$). Let us briefly recall what the strategy was, as we will need some ingredients from the proof. What we state from here to equation \eqref{eq:def_Q_puncture} is the idea of the proof of Proposition 3.1 of \cite{DKRV2}. For $b,t>0$, define the event 
\begin{equation}
\label{eq:event}
A_{b,t}:=\left\lbrace\underset{-t\leq s\leq t}{\sup}B_s<b\right\rbrace.
\end{equation}
By independence of the Brownian motions $(B_t)_{t\geq0}$ and $(B_{-t})_{t\geq0}$, we have
\begin{equation}
\P(A_{b,t})=\left(2\int_0^{b/\sqrt{t}}\frac{e^{-\frac{x^2}{2}}}{\sqrt{2\pi}}dx\right)^2=:f(b/\sqrt{t})^2,
\end{equation}
with the elementary estimates $f(x)\to 1$ as $x\to\infty$ and
\begin{equation}\label{eq:estimate_P}
f(x)\sim\sqrt{\frac{2}{\pi}}x\text{ as }x\to0.
\end{equation}

The law of $(b-B_s)_{-t\leq s\leq t}$ converges as $t\to\infty$ to a two-sided, 3-dimensional Bessel process on $\R$ taking the value $b$ at $t=0$ \cite[Lemma 4.5]{DKRV2} and the independence of the left and right processes). Hence the limiting process $(B_s)_{s\in\R}$ goes to $-\infty$ as $|s|\to\infty$ at scale roughly $-\sqrt{|s|}$.

Let $\P_b$ be the law of the GFF on $\mc{C}_\infty$ where the radial part is replaced by $b$ minus a 2-sided, 3-dimensional Bessel process taking the value $b$ at $t=0$. Under $\P_b$, $Z_\infty(Q)$ is a.s. a non-trivial random variable, and $\E_b[Z_\infty(Q)]<\infty$ \cite[Equations (5.5) and (5.6)]{DKRV2}. Furthermore the authors show that $\E_b\left[Z_\infty(Q)^{-r}\right]\in(0,\infty)$ and its value is characterised by \cite[Proposition 3.1]{DKRV2}:
\begin{equation}\label{eq:b_t}
\begin{aligned}
\underset{t\to\infty}{\lim}t\E\left[Z_t(Q)^{-r}\right]
&=\underset{t\to\infty}{\lim}\underset{b\to\infty}{\lim}t\E[Z_t(Q)^{-r}\ind_{A_{b,t}}]\\
&=\underset{t\to\infty}{\lim}\underset{b\to\infty}{\lim}tf(b/\sqrt{t})^2\E\left[\left.Z_t(Q)^{-r}\right|A_{b,t}\right]\\
&=\underset{b\to\infty}{\lim}\underset{t\to\infty}{\lim}tf(b/\sqrt{t})^2\E\left[\left.Z_t(Q)^{-r}\right|A_{b,t}\right]\\
&=\frac{2}{\pi}\underset{b\to\infty}{\lim}b^2\E_b\left[Z_\infty(Q)^{-r}\right].
\end{aligned}
\end{equation}
The exchange of limits in $b$ and in $t$ is justified by the uniform convergence in $b$ with respect to $t$. In the last line, the limit in $b$ can be shown to be finite using estimates on hitting probabilities of Bessel processes.

 		\subsubsection{Characterisation of the limit}\label{subsubsec:characterisation}
We now turn to the study of the behaviour of $\E[Z_\infty(\lambda)^{-r}]$ as $\lambda\to Q$. From the independence of the left and right radial processes, it suffices to study the one-sided problem and show that $\frac{\E[Z_{0,\infty}(\lambda)^{-r}]}{2(Q-\lambda)}$ has a limit as $\lambda\to Q$ and that this limit coincides with $\underset{b\to\infty}{\lim}b\E_b[Z_{0,\infty}(Q)^{-r}]$.

Let $\lambda\in(Q-\frac{\alpha}{2},Q)$. By the Williams path decomposition (see e.g. \cite[Lemma 2.6]{KRV2}), we can sample a Brownian motion in $\R_+$ with drift $\lambda-Q<0$ as follows:
\begin{enumerate}
\item Sample an exponential random variable $M\sim\mathrm{Exp}(2(Q-\lambda))$ (this is the supremum of the process).
\item Conditionally on $M$, run an independent Brownian motion with drift $Q-\lambda>0$ until its hitting time $T_{\lambda,b}$ of $b$.
\item Conditionally on $T_{\lambda,b}$, run an independent Brownian motion in $[T_{\lambda,b},\infty)$ with drift $\lambda-Q<0$ started from $b$ and conditioned to stay below $b$.
\end{enumerate}
By definition, what is meant by Brownian motion with drift $\nu>0$ conditioned to stay positive is the process with generator $\frac{1}{2}\frac{d^2}{dx^2}+\nu\cot(\nu x)\frac{d}{dx}$ \cite[Section 12.4]{KRV2}. In the limit $\nu\to0$, we get the generator $\frac{1}{2}\frac{d^2}{dx^2}+\frac{1}{x}\frac{d}{dx}$ of the $3d$ Bessel process. Thus, on the event that $M=b$, the Williams path decomposition converges in law as $\lambda\to Q$ to the joining of a Brownian motion run until its hitting time of $b$ and a Brownian motion conditioned to stay below $b$ (i.e. $b$ minus a $3d$ Bessel process). Thus, Williams' path decomposition gives a way to make sense of conditioning on the value of the supremum of the radial process, and we can write for all $r>0$,
\[\frac{\E\left[Z_{0,\infty}(\lambda)^{-r}\right]}{2(Q-\lambda)}=\int_0^\infty\E\left[Z_{0,\infty}(\lambda)^{-r}|M=b\right]e^{2b(\lambda-Q)}db.\] 
As already seen in Section \ref{subsubsec:background}, the properties of the Bessel process imply that, for all $b>0$, $\E[Z_{0,\infty}(Q)^{-r}|M=b]:=\underset{\lambda\to Q}{\lim}\E[Z_{0,\infty}(\lambda)^{-r}|M=b]$ exists and is positive. Furthermore, the positivity of the GMC measure implies 
\begin{equation}\label{eq:exp_decay}
\E\left[Z_{0,\infty}(\lambda)^{-r}|M=b\right]\leq\E\left[Z_{\tau_{b-1},\tau_b}(\lambda)^{-r}|M=b\right]\leq e^{-r\gamma(b-1)}\E\left[Z_{0,\tau_1}(\lambda)^{-r}|M=1\right],
\end{equation}
where we wrote $\tau_x$ for the hitting time of $x$ by the drifted process, and used the Markov property and the stationarity of the lateral noise. From \cite[Lemma 4.4]{DKRV2}, we know that $\E[Z_{0,\tau_1}(Q)^{-r}|M=b]<\infty$. Actually, this lemma also holds in the case $\lambda<Q$ since it relies on an estimate of $\P(\tau_1<t)$ as $t\to0$ which gives the same result in the drifted case. This implies that $\E\left[Z_{0,\infty}(\lambda)^{-r}|M=b\right]$ decays exponentially fast as $b\to\infty$. By stochastic domination \cite[Section 9.2]{KRV2}, $\E[Z_{0,\infty}(\lambda)^{-r}|M=b]$ is also decreasing in $\lambda$ for all $b$. It then follows from the dominated convergence theorem that
\begin{equation} \label{eq:lim_lambda}
 \underset{\lambda\to Q}{\lim}\frac{\E\left[Z_{0,\infty}(\lambda)^{-r}\right]}{2(Q-\lambda)}=\int_0^\infty\E\left[Z_{0,\infty}(Q)^{-r}|M=b\right]db.
 \end{equation}
 
 To conclude, we must show that this limit coincides with $\underset{b\to\infty}{\lim}b\E_b[Z_\infty(Q)^{-r}]$. Under $\P_b$, $(b-B_s)_{s\geq0}$ is a $3d$-Bessel process started from $b$, so $(b-B_s)^{-1}$ is a positive continuous local martingale a.s. converging to 0 as $s\to\infty$. Applying the optional stopping theorem, we find that $\P_b(\sigma_x<\infty)=\frac{x}{b}$ for all $x\in(0,b)$, where $\sigma_x$ is the first hitting time of $x$ by $(b-B_s)_{s\geq0}$ (this is the well-known fact that the infimum of a Bessel process started from $b>0$ is uniformly distributed in $(0,b)$, see also \cite[Exercise 2.5]{MY} for a more general setting). It follows that under $\P_b$, $M=\underset{s\geq0}{\sup}B_s$ is uniformly distributed in $[0,b]$. Hence
\begin{equation}\label{eq:equal_limits}
 \underset{b\to\infty}{\lim}b\E_b\left[Z_{0,\infty}(Q)^{-r}\right]=\underset{b\to\infty}{\lim}\int_0^b\E_b\left[Z_{0,\infty}(Q)^{-r}|M=b'\right]db'=\int_0^\infty\E\left[Z_{0,\infty}(Q)^{-r}|M=b\right]db.
 \end{equation}

Thus we find the same limit as in \eqref{eq:lim_lambda}. Now we go back to the two-sided setting. Since the left and right radial processes are i.i.d. Brownian motions, we can apply the above conditioning to each one of these processes independently, and putting together \eqref{eq:b_t}, \eqref{eq:lim_lambda} and \eqref{eq:equal_limits} yields: 
\[\underset{\lambda\to Q}{\lim}\frac{\E\left[Z_\infty(\lambda)^{-r}\right]}{4(Q-\lambda)^2}=\underset{t\to\infty}{\lim}\frac{\pi t}{2}\E\left[Z_t(Q)^{-r}\right].\]

 Plugging this into the expression for the correlation function yields
\begin{equation}
\begin{aligned}
\frac{\pi}{2}\underset{t\to\infty}{\lim}t\langle V_Q(0)V_\alpha(1)V_Q(\infty)\rangle_t
&=2\gamma^{-1}\mu^{-\frac{\alpha}{\gamma}}\Gamma\left(\frac{\alpha}{\gamma}\right)\underset{t\to\infty}{\lim}\frac{\pi}{2}t\E\left[Z_t^{-r}\right]\\
&=2\gamma^{-1}\mu^{-\frac{\alpha}{\gamma}}\Gamma\left(\frac{\alpha}{\gamma}\right)\underset{\lambda\to Q}{\lim}\frac{\E\left[Z_\infty(\lambda)^{-r}\right]}{4(\lambda-Q)^2}\\
&=\frac{1}{4}\underset{\lambda\to Q}{\lim}\frac{C_\gamma(\lambda,\alpha,\lambda)}{(\lambda-Q)^2}.
\end{aligned}
\end{equation}\qed

		\subsubsection{Heuristic interpretation of the limit}\label{subsubsec:heuristic}
For the record, we give a heuristic interpretation of the result of Proposition \ref{prop:spheric_limit}. Using the expression of the Radon-Nikodym derivative of the Bessel process with respect to Brownian motion, one can rewrite \eqref{eq:b_t} as
\begin{equation}
\label{eq:def_Q_puncture}
\underset{t\to\infty}{\lim}t\E\left[Z_t^{-r}\right]=\frac{2}{\pi}\underset{t\to\infty}{\lim}\E\left[B_tB_{-t}Z_t^{-r}\right]
\end{equation}
and we define the (renormalised) correlation function to be
\begin{equation}
\label{eq:renormalised_correl}
^R\langle V_Q(0)V_\alpha(1)V_Q(\infty)\rangle_{\S^2}:=2\gamma^{-1}\mu^{-\frac{\alpha}{\gamma}}\Gamma\left(\frac{\alpha}{\gamma}\right)\underset{t\to\infty}{\lim}\E\left[B_tB_{-t}Z_t^{-r}\right].
\end{equation}

 We have seen that this correlation function can be expressed using the derivative of DOZZ formula at the critical point $\alpha_1=\alpha_3=Q$. The usual interpretation of $^R\langle V_Q(0)V_\alpha(1)V_Q(\infty)\rangle_{\S^2}$ is that of a derivative operator. Indeed, the value of $B_t$ in equation \eqref{eq:renormalised_correl} is the average of the field on the circle of radius $e^{-t}$ about $0$, so it is formally $X(0)$ in the limit $t\to\infty$. Still on the formal level, we have the interpretation
\begin{equation}
\begin{aligned}
^R\langle V_Q(0)V_\alpha(1)V_Q(\infty)\rangle
&=\langle X(0)V_Q(0)V_\alpha(1)X(\infty)V_Q(\infty)\rangle_{\S^2}\\
&=\left\langle\frac{d}{d\lambda}V_\lambda(0)_{|\lambda=Q}V_\alpha(1)\frac{d}{d\lambda}V_\lambda(\infty)_{|\lambda=Q}\right\rangle_{\S^2}.
\end{aligned}
\end{equation}

This explains why we could expect $^R\langle V_Q(0)V_\alpha(1)V_Q(\infty)\rangle_{\S^2}$ to be expressed in terms of the (second) derivative of DOZZ formula at the critical point.

\subsection{Proof of Proposition \ref{prop:spheric_equiv_toric}}
\label{subsec:spheric_equiv_toric}
The second item in the proof of Theorem \ref{thm:result} is the equivalent asymptotic behaviour of $\langle V_\alpha(0)\rangle_\frac{it}{\pi}$ and $\langle V_Q(0)V_\alpha(1)V_Q(\infty)\rangle_t$. This will follow from comparisons between Green's function on the infinite cylinder and the torus.

\begin{lemma}
\label{lem:torus_decomposition}
Let $X_t$ be a GFF on the torus $\T_t$ (embedded into $\mc{C}_t$) with the normalisation $\int_{\S^1}X_t(0,\theta)d\theta=0$. Then we can write $X_t(s,\theta)=B_t(s)+Y_t(s,\theta)$ with $B_t$ independent of $Y_t$ and
\begin{enumerate}
\item For all $s\in(-t,t)$, $B_t(s)=\frac{B^e(|s|)+\mathrm{sign}(s)B^o(|s|)}{\sqrt{2}}$ where $(B^o(s))_{0\leq s\leq t}$ is standard Brownian bridge and $(B^e(s))_{0\leq s\leq t}$ is an independent standard Brownian motion.
\item $Y_t$ is a $\log$-correlated Gaussian field with covariance kernel (recall equation \eqref{eq:covar_y})
\begin{equation}
\label{eq:covar_periodic}
H_t(s,\theta,s',\theta')=\sum_{n\in\Z}H(s,\theta,s'+2nt,\theta').
\end{equation}
\end{enumerate}
\end{lemma}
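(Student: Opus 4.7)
The plan is to combine two orthogonal decompositions of $H^1_0(\T_t)$: the radial/angular splitting (which gives independence of $B_t$ and $Y_t$) and the odd/even splitting described in Section 2.1 (which identifies the law of $B_t$). The lateral covariance will then be computed by the method of images on the universal cover $\mc{C}_\infty\to\T_t$.

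First I would use the Dirichlet-orthogonal decomposition $H^1_0(\mc{C}_t) = H^{1,R}_0(\mc{C}_t) \oplus H^{1,A}_0(\mc{C}_t)$ to write $X_t = B_t + Y_t$, where $B_t(s) := \frac{1}{2\pi}\int_{\S^1} X_t(s,\theta)\,d\theta$ and $Y_t := X_t - B_t$ are independent centred Gaussian fields (independence follows since Gaussians associated to orthogonal subspaces are independent). To identify the law of $B_t$, I would apply the odd/even decomposition $X_t(s,\theta) = \tfrac{1}{\sqrt 2}(X_t^N(|s|,\theta) + \mathrm{sign}(s)\, X_t^D(|s|,\theta))$ and project onto the angular mean, obtaining $B_t(s) = \tfrac{1}{\sqrt 2}(X_{t,0}^N(|s|) + \mathrm{sign}(s)\, X_{t,0}^D(|s|))$. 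The radial projections $X_{t,0}^{D}$ and $X_{t,0}^{N}$ are centred 1D Gaussian processes on $[0,t]$ whose covariances are the Green functions of $-\partial_s^2$ on $(0,t)$ with Dirichlet and Neumann boundary conditions respectively, namely $s\wedge s' - ss'/t$ and $s\wedge s'$. These identify $X_{t,0}^{D}$ as a standard Brownian bridge and $X_{t,0}^{N}$ as a standard Brownian motion, the zero mode of the Neumann process being pinned at $s = 0$ by the normalisation $\int_{\S^1} X_t(0,\theta)\,d\theta = 0$. Setting $B^o := X_{t,0}^{D}$ and $B^e := X_{t,0}^{N}$ proves item~(1).

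For item~(2), I would construct a candidate covariance kernel on $\T_t$ by the method of images on the cover $\mc{C}_\infty \to \T_t$ given by the quotient $s \sim s+2t$, setting
\[\til{H}_t(s,\theta,s',\theta') := \sum_{n\in\Z} H(s,\theta,s'+2nt,\theta').\]
Absolute and uniform convergence away from the diagonal follows from the bound $H(s,\theta,s'+2nt,\theta') = -\log|1 - e^{-|s-s'-2nt|+i\phi_n}| = O(e^{-2|n|t})$ for large $|n|$, obtained by factoring out the dominant exponential in the expression (2.9) for $H$. By construction $\til H_t$ is $2t$-periodic in $s$ and in $s'$, $2\pi$-periodic in the angular variables, symmetric, and inherits from $H$ the vanishing $\S^1$-average in the first angular variable, so it descends to a symmetric angular kernel on $\T_t$. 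A distributional computation then shows that $-\frac{1}{2\pi}\Delta_{(s,\theta)}\til H_t$ agrees with the Dirac mass at $(s',\theta')$ modulo the radial-mode projector on $\T_t$, which is exactly the defining property of the angular Green function $H_t$; hence $\til H_t = H_t$.

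The main obstacle is the distributional Laplacian computation in the last step: one must simultaneously track the delta masses produced at all periodic images of $(s',\theta')$ (only the $n=0$ copy survives on the quotient) and the cylindrical radial correction already built into the kernel $H$, and check that they sum to exactly the torus's radial projector on the right-hand side. Once this accounting is done, the lemma follows from the orthogonality of the two splittings and standard facts about 1D Gaussian processes with prescribed boundary conditions.
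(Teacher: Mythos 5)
Your approach matches the paper's in both ingredients: item (1) via the odd/even decomposition of the radial mode, item (2) via the method of images on the cover $\mc{C}_\infty\to\T_t$. The ``main obstacle'' you flag is resolved more simply than your sketch suggests. Fix $(s,\theta)\in\mc{C}_t$: each term $H(s,\theta,\cdot+2nt,\cdot)$ with $n\neq0$ has its singularity at $(s-2nt,\theta)$, outside $\mc{C}_t$, so the tail $\sum_{n\neq0}H(s,\theta,\cdot+2nt,\cdot)$ is an absolutely convergent sum of functions harmonic on $\mc{C}_t$ and hence itself harmonic there — only the $n=0$ term contributes a singularity inside the fundamental domain. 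Testing $-\frac{1}{2\pi}\Delta_t\til{H}_t(s,\theta,\cdot,\cdot)$ against a smooth $\varphi$ on $\T_t$, and viewing $\varphi$ as $2t$-periodic on $\mc{C}_\infty$, one unfolds the sum to get $\langle-\frac{1}{2\pi}\Delta H(s,\theta,\cdot,\cdot),\varphi\rangle_{\mc{C}_\infty}=\varphi(s,\theta)$, the last equality holding whenever $\varphi$ is \emph{angular}. Restricting to angular test functions is all that is required to identify the angular Green function, and it makes the ``radial-mode projector'' you worry about drop out automatically: the radial correction built into $H$ has vanishing angular average, so it never enters the pairing. There is no delta-mass bookkeeping at the periodic images to do beyond the observation above.

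One minor imprecision in item~(1): $s\wedge s'$ is not the zero-mean Neumann Green function on $(0,t)$; it is the covariance of the Neumann radial process \emph{after} its zero mode has been re-pinned so that the value at $s=0$ vanishes, which is exactly what the normalisation $\int_{\S^1}X_t(0,\theta)\,d\theta=0$ accomplishes. The end identification of $B^e$ as a standard Brownian motion on $[0,t]$ is of course correct.
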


\begin{proof}
With the choice of normalisation of the Lemma, we can sample $X_t$ simply by setting $X_t:=\tilde{X}_t-\int_{\S^1}X_t(0,\theta)d\theta$ where $\tilde{X}_t$ is a GFF on $\T_t$ with vanishing mean on $\T_t$. From Section \ref{subsubsec:gff}, the radial part of $\tilde{X}_t$ on $(0,t)\times\S^1$ is $\frac{B^o(s)+B^e(s)}{\sqrt{2}}$ where $(B^o(s))_{0\leq s\leq t}$ is a standard Brownian bridge and $B^e(s)$ is an independent Brownian motion whose mean has been subtracted. The normalisation of $X_t$ is simply translating $B^o$ along the $y$ axis such that $B_t^o(0)=0$, so the radial part is the claimed one.

Now we deal with the angular part $X_t$. From equation \eqref{eq:covar_y}, we have for all $s\in(-t,t)$, $\theta\in\S^1$ and $n\in\Z\setminus\{0\}$
\[H(0,0,s+2nt,\theta)=\log\frac{1}{|1-e^{-|s+2n|t-i\theta}|}=O_{|n|\to\infty}(e^{-2|n|t}),\]
implying that the series \eqref{eq:covar_periodic} converges absolutely on compact subsets of $\mc{C}_t\setminus\{(s,\theta)\}$ for all $(s,\theta)\in\mc{C}_t$ (we used the translation invariance of $H$). In particular, $H_t(s,\theta,\cdot,\cdot)$ defines a function on $\T_t$.

For all $(s,\theta)\in\mc{C}_t$, the function $(s',\theta')\mapsto\sum_{n\neq0}H(s,\theta,s'+2nt,\theta')$ defined on $\mc{C}_t$ is an absolutely convergent sum of harmonic functions on $\mc{C}_t$ (with respect to the Laplacian on $\mc{C}_\infty$), and the second derivatives also converge absolutely. Hence the function is harmonic on $\mc{C}_t$. Note also that $H_t(s,\theta,\cdot,\cdot)$ is a sum of angular functions, so it is also angular. Let $\varphi\in\mc{C}^\infty(\T_t)$ be an angular function. We can view $\varphi$ as a $2t$-periodic function on $\mc{C}_\infty$ and we have $\langle-\frac{1}{2\pi}\Delta_tH_t(s,\theta,\cdot,\cdot),\varphi\rangle=\langle\frac{-1}{2\pi}\Delta H(s,\theta,\cdot,\cdot),\varphi\rangle=\varphi(s,\theta)$. So by definition $H_t$ is the angular part of Green's function on $\T_t$.
\end{proof}
\begin{proof}[Proof of Proposition \ref{prop:spheric_equiv_toric}]
Let us introduce some notation. Fix $\delta>0$ and write
\begin{equation}
Z_t:=\int_{\mc{C}_t}e^{\gamma(B(s)+\alpha G(0,s+i\theta))}dN^\gamma(s,\theta)=U_t+\xi_t,
\end{equation}
where
\begin{equation}
\begin{aligned}
&U_t=\int_{\mc{C}_{t^{1-\delta}}}e^{\gamma(B(s)+\alpha G(0,s+i\theta))}dN^\gamma(s,\theta)\\
&\xi_t=\int_{(-t,-t^{1-\delta})\cup(t^{1-\delta},t)\times\S^1}e^{\gamma(B(s)+\alpha G(0,s+i\theta))}dN^\gamma(s,\theta)
\end{aligned}
\end{equation}
We define also
\[\tilde{Z}_t:=\int_{\T_t}e^{\gamma(B_t(s)+\alpha G_t(0,s+i\theta))}dN_t^\gamma(s,\theta)=\tilde{U}_t+\tilde{\xi}_t\]
where $\tilde{U}_t$ and $\tilde{\xi}_t$ are defined similarly (here $dN^\gamma_t$ is the GMC measure of the field $Y_t$). The term $\tilde{U}_t$ is the core of the mass while $\tilde{\xi}_t$ is some error term that we have to control. We will see that $\tilde{U}_t$ behaves exaclty as $U_t$ as $t\to\infty$.

It follows from Lemma \ref{lem:torus_decomposition} that for all $x,y\in\mc{C}_{t^{1-\delta}}$
\begin{equation}
\label{eq:green_comparison}
|H_t(x,y)-H(x,y)|=\left|\sum_{n\neq0}H(x,y+2nt)\right|\leq Ce^{-2t}
\end{equation}
for some constant $C>0$ independent of $t$.

Let $b>0$ and define the event
\[\tilde{A}_{b,t}:=\left\lbrace \underset{-t\leq s\leq t}{\sup}B_t(s)<b\right\rbrace.\]
By Brownian scaling, there exists a function $g:\R_+\to[0,1]$ such that $\P\left(\tilde{A}_{b,t}\right)=g(b/\sqrt{t})$. It is clear that $\underset{x\to\infty}{\lim}g(x)=1$ and we will show in Lemma \ref{lem:conditioning} (at the end of this section) that $g(x)\underset{x\to0}{\sim}\frac{3}{\pi}x^2$.

On $\tilde{A}_{b,t}$, the process $(b-B_t(s))_{0\leq s\leq t}$ is absolutely continuous with respect to a 3d-Bessel process started from $b$. Hence there exists $\nu>0$ such that the event 
\[\left\lbrace \forall s\in (t^{1-\delta},t)\cup(-t,-t^{1-\delta}),\;B_t(s)\leq -t^{1/2-\nu}\right\rbrace\]
 occurs with high probability as $t\to\infty$, implying that $\tilde{\xi}_t\to0$ in probability conditionally on $\tilde{A}_{b,t}$ as $t\to\infty$.
Similarly, $\xi_t\to0$ in probability as $t\to\infty$ when conditioned on $A_{b,t}$.

From the previous subsection we know that $Z_t$ conditioned on $A_{b,t}$ has a non-trivial limit $Z_\infty$ as $t\to\infty$, and the negative moments of $Z_\infty$ are finite. Now for each $\varepsilon>0$, we have
\begin{equation}
\label{eq:xi_out}
\E[U_t^{-r}|A_{b,t}]\geq\E[Z_t^{-r}|A_{b,t}]\geq\E[(U_t+\varepsilon)^{-r}\ind_{\xi_t<\varepsilon}|A_{b,t}],
\end{equation}
and taking first $t\to\infty$ then $\varepsilon\to0$ yields
\[\underset{t\to\infty}{\lim}\E[U_t^{-r}|A_{b,t}]=\underset{t\to\infty}{\lim}\E[Z_t^{-r}|A_{b,t}].\]

We now turn to the study of $\tilde{U}_t$. Let $\mc{E}_t$ be the Radon-Nikodym derivative of the law of the process $(B_t(s))_{-t^{1-\delta}\leq s\leq t^{1-\delta}}$ (conditioned on $\tilde{A}_{b,t}$) with respect to that of the process $(B(s))_{-t^{1-\delta}\leq s\leq t^{1-\delta}}$ (conditioned on $A_{b,t}$). From Lemma \ref{lem:torus_decomposition}, this is the Radon-Nikodym derivative of the Brownian bridge $B^{o}$ in $[0,t]$ stopped at $t^{1-\delta}$ with respect to Brownian motion in $[0,t^{1-\delta}]$. From \cite[Exercise 9.4]{MY}, this is explicitely given by $(1-t^{-\delta})^{-1/2}e^{-\frac{(B^o_{t^{1-\delta}})^2}{2(t-t^{1-\delta})}}$, so $\mc{E}_t\to1$ a.s. and in $L^1$. Thus:
\begin{equation}
\label{eq:lim_core}
\begin{aligned}
\E\left[\tilde{U}_t^{-r}|\tilde{A}_{b,t}\right]
&=\E\left[\mc{E}_t\left.\left(\int_{\mc{C}_{t^{1-\delta}}}e^{\gamma(B(s)+\alpha G_t(0,s+i\theta))}dN^\gamma_t(s,\theta)\right)^{-r}\right|A_{b,t}\right]\\
&=\E\left[\mc{E}_tU_t^{-r}|A_{b,t}\right](1+O(e^{-2t})),
\end{aligned}
\end{equation}
where we have used the estimate \eqref{eq:green_comparison} and Kahane's convexity inequality (Section \ref{subsubsec:gmc}) to go from $Y_t$ (resp. $G_t(0,\cdot)$) to $Y$ (resp. $G(0,\cdot)$). Hence
\[\underset{t\to\infty}{\lim}\E[\tilde{U}_t^{-r}|\tilde{A}_{b,t}]=\underset{t\to\infty}{\lim}\E[U_t^{-r}|A_{b,t}].\]

Since $\tilde{\xi}_t\to0$ in probability conditionally on $\tilde{A}_{b,t}$, we find using the same argument as in \eqref{eq:xi_out}
\begin{equation}\label{eq:tilde_no_tilde}
\underset{t\to\infty}{\lim}\E[\tilde{Z}_t^{-r}|\tilde{A}_{b,t}]=\underset{t\to\infty}{\lim}\E[\tilde{U}_t^{-r}|\tilde{A}_{b,t}]=\underset{t\to\infty}{\lim}\E[U_t^{-r}|A_{b,t}]=\underset{t\to\infty}{\lim}\E[Z_t^{-r}|A_{b,t}].
\end{equation}

Finally, we want to take the limit $b\to\infty$ in the above equation and then exchange the order of the limits. This is the argument of \cite{DKRV2} leading to \eqref{eq:b_t} but we briefly recall it for completeness. Recall that $\E[Z_t^{-r}|\sup_{0\leq s\leq t}B_s=x]=e^{-\gamma xr}\times O(1)$ as $x\to\infty$, where $O(1)$ is independent of $t>0$. This is because factorising out the maximum gives a contribution $e^{-bxr}$ on this event (see also \eqref{eq:exp_decay}). Moreover, the law of $\sup_{0\leq s\leq t}B_s$ conditionally on $\{\sup_{0\leq s\leq t}B_s<b\}$ is absolutely continuous with respect to $\frac{\ind_{(0,b)}dx}{b}$ (the uniform measure on $(0,b)$), and the density is uniformly bounded in $t>0$. Thus, the convergence of $b^2\E[Z_t^{-r}|A_{b,t}]$ as $b\to\infty$ is exponentially fast, with a rate $O(e^{-\gamma br})$ independent of $t>0$. This uniform convergence enables to exchange limits, and with the estimate \eqref{eq:estimate_P} we find $\underset{b\to\infty}{\lim}\underset{t\to\infty}{\lim}\,b^2\E[Z_t^{-r}|A_{b,t}]=\frac{\pi}{2}\underset{t\to\infty}{\lim}\,t\E[Z_t^{-r}]$. The same argument applies to $\tilde{Z}_t$, and Lemma \ref{lem:conditioning} then entails:


\begin{equation}
\label{eq:lim_bridge}
\begin{aligned}
\underset{t\to\infty}{\lim}\frac{\pi}{3}t\E[\tilde{Z}_t^{-r}]
=\underset{b\to\infty}{\lim}b^2\underset{t\to\infty}{\lim}\E\left[\tilde{Z}_t^{-r}|\tilde{A}_{b,t}\right]
&=\underset{b\to\infty}{\lim}b^2\underset{t\to\infty}{\lim}\E\left[Z_t^{-r}|A_{b,t}\right]\\
&=\underset{t\to\infty}{\lim}\frac{\pi}{2}t\E\left[Z_t^{-r}\right],
\end{aligned}
\end{equation}
i.e. $\underset{t\to\infty}{\lim}t\E\left[\tilde{Z}_t^{-r}\right]=\frac{3}{2}\underset{t\to\infty}{\lim}t\E\left[Z_t^{-r}\right]$.

\end{proof}

We conclude this section by stating and proving Lemma \ref{lem:conditioning}.
\begin{lemma}
\label{lem:conditioning}
Let $(B_t)_{-1\leq t\leq 1}$ be a standard 2-sided Brownian motion. Then
\[\P\left(\left.\underset{-1\leq t\leq 1}{\sup}B_t<x\right|B_1=B_{-1}\right)\underset{x\to0}{\sim}\frac{3}{\pi}x^2,\]
where we abuse notation by writing $\P(\;\cdot\;|B_1=B_{-1})=\underset{\varepsilon\to0}{\lim}\P(\;\cdot\;|\;|B_1-B_{-1}|\leq\varepsilon)$.
\end{lemma}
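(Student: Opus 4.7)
The plan is to realize the conditioned process as two independent Brownian bridges joined at a common random endpoint, and then compute the resulting expectation explicitly.

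First, I would unpack the conditional law via the $\varepsilon\to 0$ procedure that the lemma provides. Setting $c:=B_1=B_{-1}$ under the conditioning, $c$ coincides with $(B_1+B_{-1})/2$, which is independent of $(B_1-B_{-1})/2$ in the orthogonal Gaussian decomposition, so $c\sim\mathcal{N}(0,1/2)$. Conditional on $c$, the two halves $(B_t)_{t\in[0,1]}$ and $(B_t)_{t\in[-1,0]}$ become independent Brownian bridges of unit length joining $0$ to $c$ (the left-hand half after time reversal).

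Next, since $\sup_{[-1,1]}B_t=\max(\sup_+,\sup_-)$ with the two summands conditionally independent given $c$, I would apply the Brownian-bridge reflection formula $\P(\sup_{[0,1]}\beta<x\mid c)=(1-e^{-2x(x-c)})\ind_{c<x}$ (valid for a bridge $\beta$ from $0$ to $c$) and Fubini to rewrite
\[
g(x):=\P\Bigl(\sup_{-1\le t\le 1}B_t<x\,\Bigm|\,B_1=B_{-1}\Bigr)
=\int_{-\infty}^{x}\bigl(1-e^{-2x(x-c)}\bigr)^2\,\frac{e^{-c^2}}{\sqrt{\pi}}\,dc.
\]

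Finally, to extract the $x\to 0$ asymptotic I would expand $(1-e^{-2x(x-c)})^2=1-2e^{-2x(x-c)}+e^{-4x(x-c)}$ and evaluate each of the three resulting Gaussian integrals by completing the square (using the shifts $u=c-kx$ for $k=0,1,2$), which produces a closed-form expression that is readily Taylor-expanded in $x$ to read off the leading $O(x^2)$ behaviour. The main obstacle will be arriving at the announced prefactor $3/\pi$: a naive two-term Taylor expansion of the integrand yields only the leading Gaussian second-moment $\E[c^2\,\ind_{c<0}]$, and one has to track both the subleading exponential corrections and the contribution coming from the truncation of the integration domain at $c=x$ carefully, combining them to pin down the precise constant rather than just its order of magnitude.
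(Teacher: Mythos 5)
Your reduction to the Brownian-bridge reflection formula and the Gaussian integral is sound, and it is in fact a cleaner and more transparent route than the paper's, which conditions via Bayes and then approximates the conditioned radial process by a $3d$ Bessel process. But the worry you raise at the end is misplaced in one direction and well-founded in another: you should not try to conjure the prefactor $3/\pi$ out of subleading corrections, because if you carry your three Gaussian integrals out exactly you obtain a closed form. Completing the square in the cross term gives $-2x(x-c)-c^2=-(c-x)^2-x^2$, and in the last term $-4x(x-c)-c^2=-(c-2x)^2$, so
\[
g(x)=\frac{1}{\sqrt{\pi}}\Bigl[\int_{-\infty}^{x}e^{-c^2}dc-2e^{-x^2}\!\!\int_{-\infty}^{0}e^{-w^2}dw+\int_{x}^{\infty}e^{-w^2}dw\Bigr]
=\frac{1}{\sqrt{\pi}}\bigl[\sqrt{\pi}-\sqrt{\pi}\,e^{-x^2}\bigr]=1-e^{-x^2},
\]
since the first and third integrals sum to $\sqrt{\pi}$ exactly. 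Hence $g(x)\sim x^2$, not $\tfrac{3}{\pi}x^2$: the stated lemma (and, downstream, the constant $\tfrac{3}{2}$ in Proposition \ref{prop:spheric_equiv_toric} and the prefactor $\tfrac{3}{4\sqrt{\pi}}$ in Theorem \ref{thm:result}) carries a small numerical error, which is harmless for the paper's main message since the overall normalisation of the one-point function is conventional.

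The source of the discrepancy is the paper's claim that, conditionally on $\{\sup_{-1\le t\le1}B_t<x\}$, the process $(x-B_t)$ is a two-sided $3d$ Bessel process started from $x$. That identification holds only for conditioning to stay positive on an \emph{infinite} horizon; on $[0,1]$ the Doob $h$-transform has time-dependent kernel $h(y,s)=\P_y(\tau_0>1-s)$, not $h(y)=y$. Concretely, by the reflection principle the terminal value $R=x-B_1$ given $\{\sup_{[0,1]}B<x\}$ has density $\tfrac{\phi(x-r)-\phi(x+r)}{2\Phi(x)-1}\to r\,e^{-r^2/2}$ (Rayleigh) as $x\to0$, whereas the Bessel$(3)$ endpoint is Maxwell, $\sqrt{2/\pi}\,r^2e^{-r^2/2}$. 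With the Rayleigh density, $\int_0^\infty f_0(r)^2\,dr=\sqrt{\pi}/4$ and the paper's ratio becomes $\pi/2$ rather than $3/2$, which combined with $\P(\sup<x)\sim\tfrac{2}{\pi}x^2$ again gives $g(x)\sim x^2$, consistent with your exact computation. So: carry your integrals through to the closed form, report the constant $1$, and note that the lemma's $3/\pi$ should be corrected to $1$ (with the factor $3/2$ in \eqref{eq:chaos_limit} replaced by $\pi/2$).
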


\begin{proof}
For $\varepsilon>0$ we have
\begin{equation}
\label{eq:estimate}
\begin{aligned}
\P\left(\left.\underset{-1\leq t\leq 1}{\sup}B_t<x\right||B_1-B_{-1}|<\varepsilon\right)
&=\P\left(\underset{-1\leq t\leq 1}{\sup}B_t<x\right)\frac{\P\left(|B_1-B_{-1}|<\varepsilon\left|\underset{-1\leq t\leq 1}{\sup}B_t<x\right.\right)}{\P\left(|B_1-B_{-1}|<\varepsilon\right)}.
\end{aligned}
\end{equation}
We have the basic estimate
\[\P(|B_1-B_{-1}|<\varepsilon)\underset{\varepsilon\to0}{\sim}2\varepsilon\int_\R\frac{e^{-x^2}}{2\pi}dx=\frac{\varepsilon}{\sqrt{\pi}}.\]

Now we need to estimate the same probability when conditioned on $\left\lbrace\underset{-1\leq t\leq 1}{\sup}B_t<x\right\rbrace$. On this event, the process $(x-B_t)_{-1\leq t\leq 1}$ has the law of a two-sided Bessel process started from $x$. At time 1, the density of this Bessel process is the density of $((x+X)^2+Y^2+Z^2))^{1/2}$ where $(X,Y,Z)$ are i.i.d. normal random variables. Let $f_x(\cdot)$ be the density function of this random variable. It is straightforward to check that $f_0(r)=\sqrt{\frac{2}{\pi}}r^2e^{-\frac{r^2}{2}}\ind_{u\geq0}$ and furthermore
\[\int_0^\infty f_0(r)^2dr=\frac{2}{\pi}\int_0^\infty r^4e^{-r^2}dr=\frac{3}{4\sqrt{\pi}}.\]
 Now we have the following bounds on $f_x$ (recall $x\geq0$)
\[\sqrt{\frac{2}{\pi}}r^2e^{-\frac{(r+x)^2}{2}}\leq f_x(r)\leq \sqrt{\frac{2}{\pi}}r^2e^{-\frac{(r-x)^2}{2}},\]
so that 
\[\int_0^\infty f_x(r)^2dr=\frac{3}{4\sqrt{\pi}}+o_x(1).\]
From here a straightforward computation shows
\[\underset{\varepsilon\to0}{\lim}\frac{\P\left(|B_1-B_{-1}|<\varepsilon\left|\underset{-1\leq t\leq 1}{\sup}B_t<x\right.\right)}{\P(|B_1-B_{-1}|<\varepsilon)}=\frac{\int_0^\infty f_0(r)^2dr}{\int_\R\frac{e^{-r^2}}{2\pi}dr}+o_x(1)=\frac{3}{2}+o_x(1).\]

Hence recalling \eqref{eq:estimate}:
\[\P\left(\left.\underset{-1\leq t\leq 1}{\sup}B_t<x\right|B_1=B_{-1}\right)=\underset{\varepsilon\to0}{\lim}\P\left(\left.\underset{-1\leq t\leq 1}{\sup}B_t<x\right||B_1-B_{-1}|<\varepsilon\right)\underset{x\to0}{\sim}\frac{3}{\pi}x^2.\]

%

\end{proof}
Let us see how the Lemma is useful. Let $(B_t)_{-1\leq t\leq 1}$ be standard two-sided Brownian motion. Then the even part $B^e_t:=\frac{B_t+B_{-t}}{\sqrt{2}}$ and the odd part $B^o_t:=\frac{B_t-B_{-t}}{\sqrt{2}}$ are independent Brownian motions, and $|B_1-B_{-1}|=\sqrt{2}|B^o_1|$. So conditioning on the event $B_1=B_{-1}$ is conditioning on $B^o_1=B^o_{-1}$, i.e. taking the odd part to be a Brownian bridge. Hence if $\tilde{B}_{-1\leq t\leq 1}$ is the radial part of the GFF on $\T_1$, we have $\P\left(\underset{-1\leq t\leq 1}{\sup}\tilde{B}_t<x\right)=\P\left(\left.\underset{-1\leq t\leq 1}{\sup}B_t<x\right|B_1=B_{-1}\right)$. The general case follows by Brownian scaling.

\appendix
\section{The DOZZ formula}
\label{app:dozz}
The DOZZ formula is the expression of the 3-point correlation function on the sphere $\langle V_{\alpha_1}(0)V_{\alpha_2}(1)V_{\alpha_3}(\infty)\rangle_{\S^2}$. The formula reads
\begin{equation}
\label{eq:dozz}
\begin{aligned}
C_\gamma(\alpha_1,\alpha_2,\alpha_3)=&\left(\pi\mu\left(\frac{\gamma}{2}\right)^{2-\frac{\gamma^2}{2}}\frac{\Gamma(\gamma^2/4)}{\Gamma(1-\gamma^2/4)}\right)^{-\frac{\bbar{\alpha}-2Q}{\gamma}}\\
&\times\frac{\Upsilon'_{\frac{\gamma}{2}}(0)\Upsilon_{\frac{\gamma}{2}}(\alpha_1)\Upsilon_{\frac{\gamma}{2}}(\alpha_2)\Upsilon_{\frac{\gamma}{2}}(\alpha_3)}{\Upsilon_{\frac{\gamma}{2}}\left(\frac{\bbar{\alpha}-2Q}{2}\right)\Upsilon_{\frac{\gamma}{2}}\left(\frac{\bbar{\alpha}}{2}-\alpha_1\right)\Upsilon_{\frac{\gamma}{2}}\left(\frac{\bbar{\alpha}}{2}-\alpha_2\right)\Upsilon_{\frac{\gamma}{2}}\left(\frac{\bbar{\alpha}}{2}-\alpha_3\right)},
\end{aligned}
\end{equation}
where $\bbar{\alpha}=\alpha_1+\alpha_2+\alpha_3$ and $\Upsilon_{\frac{\gamma}{2}}$ is Zamolodchikov's special function. It has the following integral representation for $\Re z\in(0,Q)$ 
\[\log\U(z)=\int_0^\infty\left(\left(\frac{Q}{2}-z\right)^2e^{-t}-\frac{\sinh^2\left(\left(\frac{Q}{2}-z\right)\frac{t}{2}\right)}{\sinh\left(\frac{\gamma t}{4}\right)\sinh\left(\frac{t}{\gamma}\right)}\right)\frac{dt}{t}\]
and it extends holomorphically to $\C$.

It satisfies the functional relation $\U(Q-z)=\U(z)$ and it has a simple zero at $0$ if $\gamma^2\in\R\setminus\Q$\footnote{This is not really a restriction since the theory is continuous in $\gamma$}. Thus it also has a simple zero at $Q$ and $\U'(Q)=-\U'(0)\neq0$.

Of great importance in this paper is the derivative DOZZ formula at the critical point $\alpha_1=Q=\alpha_3$ which has the expression
\[\partial_{\alpha_1\alpha_3}^2C_\gamma(Q,\alpha,Q)=\left(\pi\mu\left(\frac{\gamma}{2}\right)^{2-\frac{\gamma^2}{2}}\frac{\Gamma(\gamma^2/4)}{\Gamma(1-\gamma^2/4)}\right)^{-\frac{\alpha}{\gamma}}\frac{\Upsilon_\frac{\gamma}{2}'(0)^3\Upsilon_\frac{\gamma}{2}(\alpha)}{\Upsilon_\frac{\gamma}{2}\left(\frac{\alpha}{2}\right)^4}.\]

\section{Conical singularities}
\label{app:conical}

\begin{figure}[h!]
\centering
\includegraphics[scale=0.6]{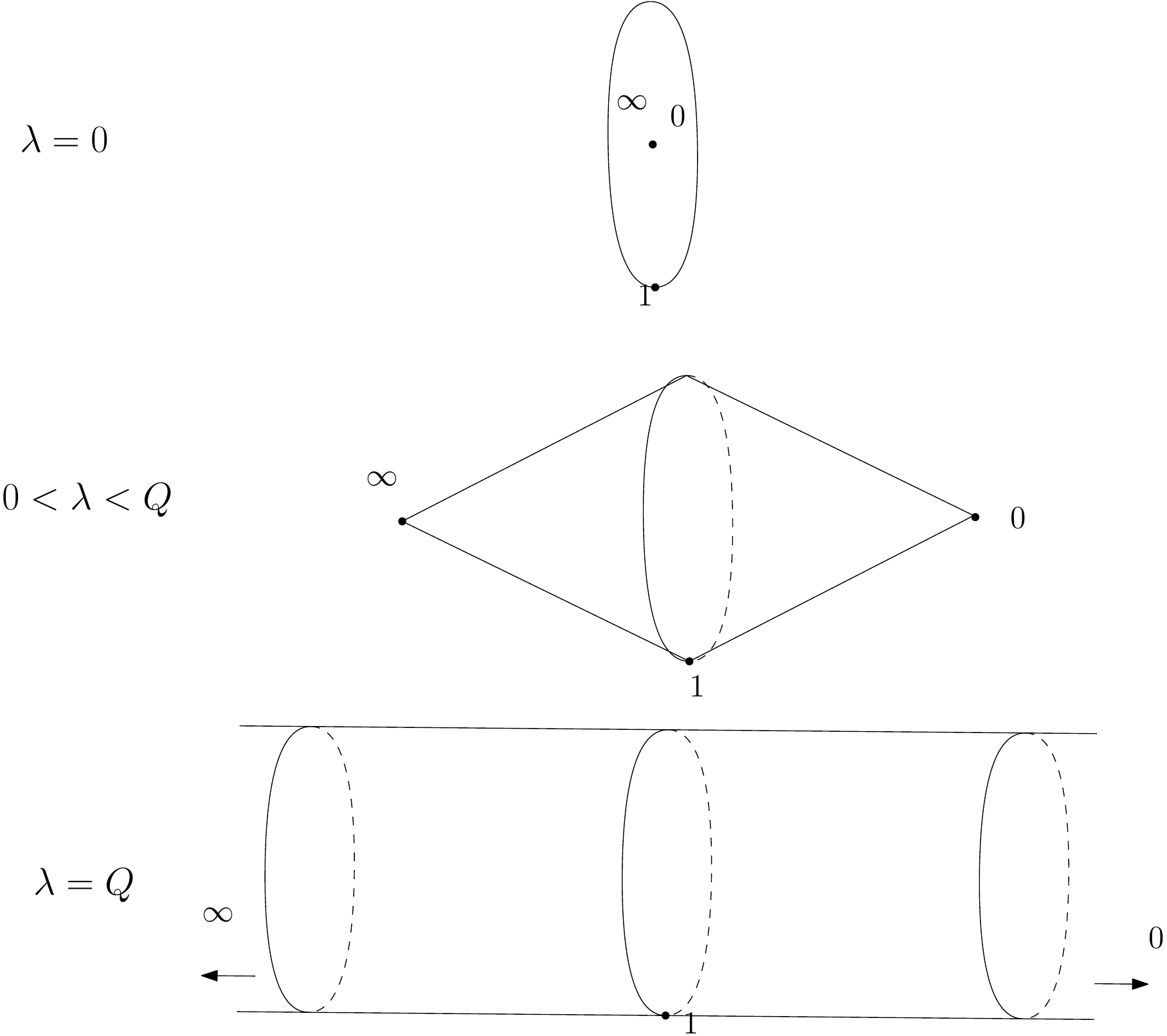}
\caption{\label{fig:conical}Conic degeneration under the insertion of the vertex operators $V_\lambda(0)$ and $V_\lambda(\infty)$. \textbf{Top}: For $\lambda=0$, we have the cr\^epe metric, i.e. two disks glued together. \textbf{Middle}: For $0<\lambda<Q$, we have two Euclidean cones glued together. \textbf{Bottom}: For $\lambda=Q$, the angle of the cones is 0, so we get a bi-infinite cylinder. The limit $\lambda\to Q$ is the setting of the proof of Proposition \ref{prop:spheric_limit}}
\end{figure}

We study the effect of a change of measure with respect to the Liouville field. Let $X$ be a GFF on $\S^2$\footnote{We work on the sphere for concreteness but this argument is valid on any compact Riemann surface.} with some background metric $g$ and $dM^\gamma_g$ be the associated chaos measure (regularised in $g$). Let $\omega\in H^1_0$ be a function such that $e^{\frac{Q}{2}\omega}\in L^1(dM^\gamma)$. Let $\hat{g}:=e^{\omega}g$ and $dM^\gamma_{\hat{g}}$ be the chaos of $X$ regularised in $\hat{g}$. Then for all $r>0$, applying succesively Girsanov's theorem and conformal covariance, we find
\begin{equation}
\E\left[e^{\langle X,\frac{Q}{2}\omega\rangle_\nabla-\frac{Q^2}{8}\norm{\omega}_\nabla^2}M^\gamma(\S^2)^{-r}\right]=\E\left[\left(\int_{\S^2}e^{\frac{\gamma Q}{2}\omega}dM^\gamma_g\right)^{-r}\right]=\E\left[M^\gamma_{\hat{g}}(\S^2)^{-r}\right].
\end{equation}
In particular, the vertex operator which is formally written $V_\alpha(z)=e^{\alpha X(z)-\frac{\alpha^2}{2}\E[X(z)^2]}$ is a special case of the previous setting with $\omega=\frac{2\alpha}{Q}G(z,\cdot)$. Hence, after regularising, we find that adding a vertex operator is the same as conformally multiplying the metric and set $\hat{g}=e^{\frac{2\alpha}{Q}G(z,\cdot)}g$, i.e. the new metric satisfies $\log\hat{g}(z+h)=-\frac{2\alpha}{Q}\log |h|+O_h(1)$ so it has a conical singularity of order $\alpha/Q$ at $z$. 

Another way to see this is to look at the curvature, which reads in the distributional sense
\[K_{\hat{g}}=e^{-\frac{2\alpha}{Q}G(z,\cdot)}\left(K_g+\frac{4\pi\alpha}{Q}\left(\delta_z-\frac{1}{\Vol_g(\S^2)}\right)\right),\]
where $\Vol_g(\S^2)$ is the volume of the sphere in the metric $g$. Thus the metric has an atom of curvature at $z$, meaning it has a conical singularity.

If $\alpha=Q$, the singularity is no longer integrable, so the volume is infinite and the surface has a semi-infinite cylinder. Loosely, we will refer to this situation as a cusp, even though the hyperbolic cusp has finite volume because of the extra $\log$-correction in the metric:
\[\log\hat{g}(z+h)=-2\log|h|-2\log\log\frac{1}{|h|}+O(1).\]
The reason for this abuse of terminology is that we are interested in GMC measure. Indeed, suppose $z=0$ in the sphere coordinates. By conformal covariance, if we use the cylinder coordinates, the $\log$-correction term is the same as shifting the radial part of the GFF from the Brownian motion $(B_s)_{s\geq0}$ to $(B_s-Q\log(1+s))_{s\geq0}$. Up to time $t$, this corresponds to a change of measure given by the exponential martingale $e^{-Q\int_0^t\frac{dB_s}{1+s}-\frac{Q^2}{2}\int_0^t\frac{1}{(1+s)^2}ds}$, which is uniformly integrable since $\int_0^\infty\frac{1}{(1+t)^2}dt<\infty$. So the new field is absolutely continuous with respect to the old one, meaning that GMC does not make a difference between a Euclidean cylinder and a hyperbolic cusp.


\hspace{10 cm}

 \end{document}